\newcounter{maincounter}
\numberwithin{maincounter}{section}
\numberwithin{equation}{section}
\newtheorem{lemma}[maincounter]{Lemma}
\newtheorem{proposition}[maincounter]{Proposition}
\newtheorem{corollary}[maincounter]{Corollary}
\newtheorem{remark}[maincounter]{Remark}
\newtheorem{theorem}[maincounter]{Theorem}
\newtheorem{conjecture}[maincounter]{Conjecture}
\newtheorem{conjecturerelmm*}{Conjecture RelMM(g)}
\def\AA{\mathbb{A}}
\def\NN{\mathbb{N}}
\def\RR{\mathbb{R}}
\def\CC{\mathbb{C}}
\def\ZZ{\mathbb{Z}}
\def\QQ{\mathbb{Q}}
\def\GG{\mathbb{G}}
\newcommand{\cal}{\mathcal}
\newcommand{\cA}{\cal{A}}
\newcommand{\cB}{\cal{B}}
\newcommand{\anE}{\mathrm{an}}
\newcommand{\an}[1]{{#1}^{\anE}}
\newcommand{\IG}{{\GG}}
\newcommand{\IC}{{\CC}}
\newcommand{\IR}{{\RR}}
\newcommand{\IQbar}{{\overline{\QQ}}}
\newcommand{\IZ}{{\ZZ}}
\newcommand{\IN}{{\NN}}
\newcommand{\IA}{{\AA}}
\newcommand{\IQ}{{\QQ}}
\newcommand{\cE}{{\mathcal E}}
\newcommand{\cY}{{\mathcal Y}}
\newcommand{\mat}[2]{{\rm Mat}_{#1}({#2})}
\newcommand{\ssm}{\setminus}
\newcommand{\trans}{\top}
\renewcommand{\subset}{\subseteq} 
\newcommand{\pullbackcorner}[1][dr]{\save*!/#1-1.7pc/#1:(-1.5,1.5)@^{|-}\restore}
\newcommand{\leftpullback}[1][dl]{\save*!/#1-1.2pc/#1:(-1,1)@^{|-}\restore}
\begin{document}

\title{Degeneracy Loci in the Universal Family of Abelian Varieties}
\author{Ziyang Gao and Philipp Habegger}
\address{IAZD (Institute of Algebra, Number Theory and Discrete Mathematics), Leibniz University Hannover, Welfengarten 1, 30167 Hannover, Germany}
\email{ziyang.gao@math.uni-hannover.de}
\address{Department of Mathematics and Computer Science, University of Basel, Spiegelgasse 1, 4051 Basel, Switzerland}
\email{philipp.habegger@unibas.ch}
\begin{abstract}
  Recent developments  on the
  uniformity of the number of
  rational points on curves and subvarieties in a moving abelian
  variety rely on the geometric concept of the degeneracy locus.
  The first-named author
  investigated the degeneracy locus in certain mixed
  Shimura varieties. In this expository note we revisit some of these results
    while minimizing the use of mixed Shimura varieties while working
    in a family of principally polarized abelian varieties. We also
  explain their relevance for applications in diophantine geometry.
\end{abstract}
\maketitle
\tableofcontents

\section{Introduction}

The goal of this expository note is to reprove some arguments in \cite{Gao:SNS17, GaoBettiRank}, especially regarding the degeneracy loci, \textit{with a minimal use of the language of mixed Shimura varieties}.

\smallskip

With a view towards application, we will work in the following setup.
Let $\mathfrak{A}_g \rightarrow \mathbb{A}_g$ be the
universal family of principally polarized $g$-dimensional abelian
varieties with level-$\ell$-structure for some $\ell\ge 3$. Then
$\mathfrak{A}_g$ carries the structure of a geometrically irreducible
quasi-projective variety defined over a number field.

Let $X$ be an irreducible closed subvariety of $\mathfrak{A}_g$. In
\cite{GaoBettiRank}, the first-named author defined the \textit{$t$-th
degeneracy locus} $X^{\mathrm{deg}}(t)$ for each $t \in \IZ$; we refer
to $\mathsection$\ref{sec:critnondeg} for a definition in our setting.
By definition, $X^{\mathrm{deg}}(t)$ is an at most countably infinite
union of Zariski closed subsets of $X$.
Yet $X^{\mathrm{deg}}(t)$ is Zariski closed in $X$, see
\cite[Theorem~1.8]{GaoBettiRank}.

The definition of  $X^{\mathrm{deg}}(t)$ involves
 \textit{bi-algebraic subvarieties} of $\mathfrak{A}_g$
and $\IA_g$; bi-algebraic subvarieties are 
explained in the beginning of $\mathsection$\ref{sec:bialgebraic}.
 Ullmo and Yafaev characterized~\cite{UYweaklyspecial} bi-algebraic subvarieties
of $\IA_g$ as the \textit{weakly special subvarieties} of $\IA_g$,
when we view $\IA_g$ as
a Shimura variety.
The first-named author \cite[Corollary~8.3]{GaoCrelle}
showed that the bi-algebraic subvarieties of $\mathfrak{A}_g$ are
precisely the \textit{weakly special} subvarieties, when we view
$\mathfrak{A}_g$ as a mixed Shimura variety, see
\cite[Definition~4.1(b)]{Pink05}. Then by some computation involving
mixed Shimura varieties, a geometric characterization of bi-algebraic
subvarieties of $\mathfrak{A}_g$ is given by
\cite[Proposition~1.1]{GaoCrelle}.

In the current paper we revisit the geometric description of a class of
bi-algebraic subvarieties of $\mathfrak{A}_g$. This is done in
Proposition~\ref{prop:monodromyaction}. Instead of obtaining a full
characterization, as in work of the first-named author,
we prove a slightly weaker result which is
sufficient for several applications in diophantine geometry.

A key tool in the proof of this proposition is already present in
the first-named author's work~\cite[$\mathsection$8]{GaoCrelle} as
well as Bertrand's overview of Manin's Theorem of the Kernel
\cite{Bertrand:Manin2020}. This tool is Andr\'e's normality theorem
for a variation of mixed Hodge structures~\cite{Andre92}.

We follow the ideas of \cite{GaoBettiRank}, in particular Theorem
8.1~\textit{loc.cit.} and derive a necessary
condition for $X^{\mathrm{deg}}(t)$ to be sufficiently large in $X$. The corresponding
result is stated here in Proposition~\ref{prop:deg} and relies on the
geometric structure result Proposition~\ref{prop:monodromyaction}.

Two values of $t$ are of particular interest for recent applications
to diophantine geometry.

In $\mathsection$\ref{sec:fiberpower} we emphasize
the case $t=0$.
The zeroth degeneracy locus $X^{\mathrm{deg}}(0)$ is of crucial importance in the
recent proof of the Uniform Mordell--Lang Conjecture \cite{DGHUnifML,
  KuehneUnifMM, UnifML}.
The mixed Ax--Schanuel
Theorem~\cite{GaoMixedAS} for the universal family of abelian
varieties
links the concept of non-degeneracy, in the sense of
\cite[Definition~1.5]{DGHUnifML}, 
with the size of $X^{\mathrm{deg}}(0)$ in $X$.
We 
refer to recent work of Bl\'azquez-Sanz--Casale--Freitag--Nagloo
\cite{BCFN:AxSchanuelI}
for a differential algebraic approach to the Ax--Schanuel Theorem.
More precisely, we replace $\mathfrak{A}_{g}$ by its
$m$-fold fiber power $\mathfrak{A}_{g}^{[m]}$ over $\IA_g$ for some
$m\in\IN=\{1,2,3,\ldots\}$ and consider
$\mathfrak{A}_g^{[m]}\subset\mathfrak{A}_{gm}$.  For the
Uniform Mordell--Lang Conjecture~\cite{DGHUnifML} for curves of genus $g\ge
2$, the subvariety $X\subset\mathfrak{A}_g^{[m]}$ is the image under
the \textit{Faltings--Zhang morphism} of the $(m+1)$-fold
 fiber
power of a suitable family of smooth projective curves of genus $g$.
Corollary~\ref{cor:fiberpowercurve} yields a sufficiently strong
statement to ensure that the relevant $X$ arising in \cite{DGHUnifML}
is non-degenerate. This corollary is a special case of \cite[Theorem~1.3]{GaoBettiRank}. But the theorem on which it is based,
Theorem~\ref{thm:deg0}, applies to more general subvarieties of
$\mathfrak{A}_g^{[m]}$ and is new.

As explained in Remark~\ref{rmk:bettirank}, the mixed Ax--Schanuel
Theorem for the universal family implies that if $X$ fails to be
non-degenerate, then $X^{\mathrm{deg}}(0)$ contains a non-empty open
subset of $X^{\mathrm{an}}$; here $X^{\mathrm{an}}$ means the
analytification of $X$ and the topology in consideration is the
Euclidean topology. (This and a converse claim is contained in
\cite[Theorem~1.7]{GaoBettiRank}.) Zariski closedness of
$X^{\mathrm{deg}}(0)$ is not logically required in the context of
\cite{DGHUnifML, KuehneUnifMM, UnifML}.

As observed in \cite[$\mathsection
11$]{GaoBettiRank},  $X^{\mathrm{deg}}(1)$ is linked to the
relative Manin--Mumford Conjecture; see $\mathsection$\ref{conj:relmmXdeg1dense} for a
formulation of this conjecture and a brief history. 
Our Corollary~\ref{cor:relmmconditional}
 contains an application of the structure
result, Proposition~\ref{prop:monodromyaction}, to reduce the relative
Manin--Mumford Conjecture for $\mathfrak{A}_g$ to
Conjecture~\ref{conj:relmmXdeg1dense}. This is in the spirit of
\cite[Proposition 11.2]{GaoBettiRank}. We plan to address
Conjecture~\ref{conj:relmmXdeg1dense} in future work.
In the current paper,  we restrict ourselves to the
family of principally polarized abelian varieties.
Again we avoid the language of mixed Shimura varieties. However,
our proof of Theorem~\ref{thm:relmminduction}, and so ultimately Corollary~\ref{cor:relmmconditional},
requires the Zariski closedness of $X^{\mathrm{deg}}(1)$ in $X$, which was 
 proved using the theory of mixed Shimura varieties~\cite[Theorem~1.8]{GaoBettiRank}. We do not reprove Zariski closedness in the current
paper.


\smallskip

Section~\ref{sec:prelimabsch} below contains some preliminaries on abelian schemes in
characteristic $0$.



\smallskip



\subsection*{Acknowledgements} The authors would like to thank
Gabriel Dill for discussions on abelian schemes and Lemma~\ref{lem:endabscheme}.
ZG has  received funding from the European Research Council (ERC) under the European Union’s Horizon 2020 research and innovation programme (grant agreement n$^{\circ}$ 945714). 
PH has received funding
from the Swiss National Science Foundation project n$^\circ$
200020\_184623.

\section{Preliminaries on Abelian Schemes}
\label{sec:prelimabsch}

Let $S$ be a smooth irreducible quasi-projective variety defined over
an algebraically closed subfield of 
$\IC$. By abuse of notation we often consider our varieties as defined
over $\IC$.
Let $\pi\colon \cA\rightarrow S$ be an abelian scheme of
relative dimension $g\ge 1$. For $s\in S$ we write $\cA_s$ for the abelian
variety over $\IC(s)$. More generally, for a morphism $T\rightarrow S$
of schemes we let $\cA_T$ denote the fiber power $\cA\times_S T$. Let $\eta\in S$ denote the generic point. 

Let $\mathrm{End}(\cA/S)$ denote the group of endomorphisms of the
abelian scheme $\cA\rightarrow S$. It is a finitely generated free
abelian group. For all $s\in S$ let $\varphi_s$ denote the restriction
of $\varphi \in \mathrm{End}(\cA/S)$ to $\cA_s$. The associated group homomorphism
homomorphism
\begin{alignat}1
  \label{eq:specializeend}
  \begin{aligned}
    \mathrm{End}(\cA/S) &\rightarrow \mathrm{End}(\cA_s/\IC(s)),\\
    \varphi&\mapsto \varphi_s
  \end{aligned}  
\end{alignat}
is injective.
As $S$ is smooth, an endomorphism of the generic fiber $\cA_\eta$
extends to an endomorphism of $\cA$ over $S$
by \cite[Proposition I.2.7]{FaltingsChai}. Therefore, (\ref{eq:specializeend}) is
bijective for $s=\eta$.

Observe that any $\varphi\in \mathrm{End}(\cA/S)$ is a
proper morphism. So the image $\varphi(\cA)$ is Zariski closed in
$\cA$. We will consider $\varphi(\cA)$ as a closed subscheme of
$\cA$ with the reduced induced structure. As $\cA$ is reduced,  $\varphi(\cA)$ is the schematic
image of $\varphi$.

The following lemma on endomorphisms of $\cA/S$ relies on a result of
Barroero--Dill and ultimately on the theory of group schemes.

\begin{lemma}
  \label{lem:endabscheme}
  Let $\varphi\in\mathrm{End}(\cA/S)$. Then $\varphi(\cA)$ is an
  abelian subscheme of $\cA$. For all $s\in S(\IC)$ the restriction
  $\varphi_s \colon \cA_s\rightarrow \cB_s$
  is surjective and its kernel has dimension $g-\dim \cB + \dim S$. 
\end{lemma}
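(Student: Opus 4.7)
The plan is to split the lemma into three pieces: first, establish that $\cB := \varphi(\cA)$ is an abelian subscheme of $\cA$ over $S$; second, deduce the fiberwise surjectivity; third, apply the standard dimension formula for surjective homomorphisms of abelian varieties.

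For the first step, which is the crux, I would invoke the referenced result of Barroero--Dill. Heuristically, $\cB$ ought to be an abelian subscheme because $\cB$ is closed in $\cA$ by properness of $\varphi$, it contains the zero section (since $\varphi(0_s)=0_s$ for all $s$), it is stable under the group law inherited from $\cA$, and its generic fiber $\cB_\eta = \varphi_\eta(\cA_\eta)$ is an abelian subvariety of $\cA_\eta$ by the classical theory of abelian varieties over a field. The non-trivial content, supplied by Barroero--Dill via the theory of group schemes, is that $\cB\to S$ is flat with smooth connected fibers of constant dimension; without such an input one cannot \textit{a priori} rule out pathological jumps in the fibers of the schematic image.

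Granting this, surjectivity of $\varphi_s\colon \cA_s\to\cB_s$ is essentially formal. The map $\varphi$ factors through the closed immersion $\cB\hookrightarrow \cA$, and since $\varphi$ is proper, the set-theoretic image equals $\cB$. Because the set-theoretic image of a proper morphism commutes with the formation of fibers over points of $S$, we have $\varphi_s(\cA_s)=\cB_s$ for each $s\in S(\IC)$. For the dimension formula, I would observe that $\cB\to S$ being an abelian scheme gives $\dim \cB_s = \dim \cB - \dim S$, and then apply the standard fact that a surjective homomorphism $f\colon A\to B$ of abelian varieties satisfies $\dim\ker f = \dim A - \dim B$. Taking $A=\cA_s$ and $B=\cB_s$ yields $\dim \ker \varphi_s = g - (\dim \cB - \dim S)$, which is the claimed quantity.

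The main obstacle is clearly the first step: producing the abelian subscheme structure on the schematic image. All the remaining assertions follow rather mechanically from properness and classical abelian variety theory once this structural result is in hand.
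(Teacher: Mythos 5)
Your proposal is correct and matches the paper's proof: both invoke the Barroero--Dill result to obtain an abelian subscheme $\cB$ whose generic fiber is $\varphi_\eta(\cA_\eta)$, then deduce fiberwise surjectivity from properness and the dimension formula from smoothness of $\cB\to S$. The paper is a bit more explicit about identifying $\varphi(\cA)$ with $\cB$ (via density of $\cA_\eta$ in $\cA$ plus properness of $\varphi$), a step you absorb by asserting $\cB_\eta=\varphi_\eta(\cA_\eta)$ outright, but this is routine (compatibility of schematic image with flat base change) and the substance of the argument is identical.
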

\begin{proof}
  Let $B$ be $\varphi(\cA_\eta)$, this is an abelian subvariety of
  $\cA_\eta$ defined over the function field $\IC(\eta)$.
  By \cite[Lemma 2.9]{BarDill} the abelian variety $B$ is the generic fiber of an
  abelian subscheme $\cB\subset \cA$.

  Then $\cA_\eta$ is contain in the closed subset $\varphi^{-1}(\cB)$
  of $\cA$. As $\cA_\eta$ lies dense in $\cA$ we have
  $\varphi(\cA)\subset \cB$, set-theoretically.
  Furthermore, $\varphi$ is proper and its image contains the dense subset $B$ of $\cB$. So 
   $\varphi(\cA)=\cB$ as sets. But $\cA$ and $\cB$ are reduced, so
  $\cB$ is the schematic image of $\varphi$. In particular,
  $\varphi(\cA)$ is an abelian subscheme of $\cA$. 

  For all $s\in S(\IC)$ we have $\varphi(\cA_s) = \cB_s$ and this image has
  dimension $\dim \cB-\dim S$ since $\cB\rightarrow S$ is smooth. The
  lemma follows as the
  kernel of $\varphi_s$ has dimension $\dim \cA_s-\dim \cB_s$.
\end{proof}

We will often treat $\varphi(\cA)$ as an abelian scheme over $S$ and
$\varphi$ as the homomorphism $\cA\rightarrow \varphi(\cA)$.

Let $V$ be an irreducible variety defined over $\IC$. A subset of
$V(\IC)$ is called \textit{meager} in $V$ if it is contained in an at most
countably infinite union of Zariski closed proper subsets of $V$.


We assume that all geometric endomorphisms of the
generic fiber $\cA_\eta$ are defined over the function field $\IC(\eta)$ of $S$.
This condition is met, for example,  if there is an integer $\ell\ge 3$
such that all $\ell$-torsion points of $\cA_{\eta}$ are
$\IC(S)$-rational \cite[Theorem~2.4]{Silverberg:fielddef}.

A point $s\in S(\IC)$ is called \textit{endomorphism generic} for $\cA/S$
if the homomorphism 
\begin{equation}
  \label{eq:endgeneric}
  \mathrm{End}(\cA/S)\otimes\IQ \rightarrow \mathrm{End}(\cA_s/\IC)\otimes\IQ
\end{equation}
induced by (\ref{eq:specializeend})
is surjective. Note that (\ref{eq:endgeneric}) is always injective.
We define
\begin{equation}
  \label{eq:exceptionalsubset}
  S^{\mathrm{exc}} := \bigl\{s \in S(\IC) : s\text{ is not endomorphism
  generic for $\cA/S$}\bigr\}.
\end{equation}

\begin{proposition}
  \label{prop:excep}
  The set $S^{\mathrm{exc}}$ is meager in $S$. 
\end{proposition}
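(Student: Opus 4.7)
The plan is to exhibit $S^{\mathrm{exc}}$ as contained in a countable union of proper Zariski closed subsets of $S$, using the Hom scheme of $\cA/S$ as the indexing device. By Grothendieck's representability theorem (compare FGA or Faltings--Chai, Chapter~I), the functor $T\mapsto \mathrm{Hom}_T(\cA_T,\cA_T)$ classifying homomorphisms of abelian $T$-schemes is representable by a separated $S$-scheme $H$ that is locally of finite type and unramified over $S$. Its geometric fibers $H_s$ are discrete and naturally identify with $\mathrm{End}(\cA_s)$ for $s\in S(\IC)$, and $H$ decomposes as a countable union of irreducible components $\{H_\alpha\}_\alpha$.

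Each $\varphi\in\mathrm{End}(\cA/S)$ gives a section $\sigma_\varphi\colon S\to H$; since $H\to S$ has relative dimension zero, the image $\sigma_\varphi(S)$ is automatically an irreducible component of $H$. The central step of the plan is to establish the converse: if $H_\alpha$ is an irreducible component with $\pi(H_\alpha)\subset S$ Zariski dense, then $H_\alpha=\sigma_\varphi(S)$ for some $\varphi\in\mathrm{End}(\cA/S)$. To this end, the generic point $\xi_\alpha\in H_\alpha$ lies over the generic point $\eta$ of $S$, and the associated $\kappa(\xi_\alpha)$-valued point of $H$ yields an endomorphism of $\cA_\eta\otimes_{\IC(\eta)}\kappa(\xi_\alpha)$. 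By the hypothesis that all geometric endomorphisms of $\cA_\eta$ are defined over $\IC(\eta)$, this endomorphism descends to some $\varphi\in\mathrm{End}(\cA_\eta/\IC(\eta))$; a diagram chase then shows that the descended $\IC(\eta)$-valued point of $H$ has image $\xi_\alpha$ in $|H|$, forcing $\kappa(\xi_\alpha)=\IC(\eta)$. The Faltings--Chai extension already cited in the context of~(\ref{eq:specializeend}) lifts $\varphi$ to $\mathrm{End}(\cA/S)$, and the equality $\sigma_\varphi(S)=H_\alpha$ follows from irreducibility and dimension.

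Granted this dichotomy, for $s\in S(\IC)$ lying outside the countable union $\bigcup_\alpha \overline{\pi(H_\alpha)}$ taken over the non-dominant components, every $\psi\in\mathrm{End}(\cA_s)$ corresponds to an $s$-valued point of $H$ lying in some component $H_\alpha$ with $s\in\pi(H_\alpha)$; such $H_\alpha$ is necessarily dominant, so $H_\alpha=\sigma_\varphi(S)$ and $\psi=\varphi_s$ for some $\varphi\in\mathrm{End}(\cA/S)$. This gives surjectivity of~(\ref{eq:specializeend}) at $s$, a fortiori of~(\ref{eq:endgeneric}), so $s\notin S^{\mathrm{exc}}$. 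Hence $S^{\mathrm{exc}}\subset\bigcup_\alpha \overline{\pi(H_\alpha)}$ is a countable union of proper Zariski closed subsets and therefore meager.

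The main obstacle I expect is the residue-field identification that pins down dominant components as the sections $\sigma_\varphi(S)$. This is precisely where the hypothesis on geometric endomorphisms being $\IC(\eta)$-rational enters: without it, the generic point of a dominant component could have a strict residue field extension, corresponding to a Galois orbit of endomorphisms not individually coming from $\mathrm{End}(\cA/S)$. A Hilbert-scheme reformulation, using the locally closed locus inside $\mathrm{Hilb}(\cA\times_S\cA/S)$ parameterizing graphs of abelian scheme homomorphisms, would yield an essentially identical argument if one prefers to bypass the representability of the Hom functor.
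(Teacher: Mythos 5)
Your argument is correct, and it takes a genuinely different route from the paper. The paper's proof is essentially a citation: it invokes Masser's effective-Nullstellensatz proof (or alternatively Hodge theory) to show that, under the standing hypothesis on geometric endomorphisms, the endomorphisms of $\cA_s$ for $s$ outside a meager set all come from the generic fiber, and then appeals to the bijectivity of~(\ref{eq:specializeend}) at $s=\eta$. You instead give a self-contained scheme-theoretic argument via the Hom scheme $H=\mathrm{Hom}_S(\cA,\cA)$: the unramifiedness of $H\to S$ makes every dominant irreducible component zero-dimensional over $S$; the hypothesis that all geometric endomorphisms of $\cA_\eta$ are $\IC(\eta)$-rational forces the generic fiber $H_\eta$ to split as a disjoint union of copies of $\mathrm{Spec}\,\IC(\eta)$ (otherwise a component $\mathrm{Spec}\,K_i$ with $K_i\neq\IC(\eta)$ would contribute geometric points with no rational point below, contradicting the bijection $H_\eta(\IC(\eta))\to H_\eta(\overline{\IC(\eta)})$); and the Faltings--Chai extension then identifies every dominant component with a section $\sigma_\varphi(S)$. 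The union of the images of the non-dominant components is the desired countable union of proper closed sets containing $S^{\mathrm{exc}}$. Two minor points worth tightening if you were to write this up: the claim that $\sigma_\varphi(S)$ is a component of $H$ uses that a strict inclusion of irreducible closed subsets in a finite-type $\IC$-scheme raises dimension, together with the bound $\dim H_\alpha\le\dim S$ coming from relative dimension zero; and the locus of graphs inside $\mathrm{Hilb}_S(\cA\times_S\cA)$ is locally closed rather than open, though this does not affect the conclusion that each degree piece of $H$ is quasi-projective over $S$, hence that $H$ has countably many components. What your approach buys is independence from Masser's quantitative input and from Hodge theory, at the cost of invoking the representability and unramifiedness of Hom schemes of abelian schemes; the paper's route is shorter but defers the real work to the cited references.
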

\begin{proof}
  This proposition can be proved using Hodge theory.
  Masser~\cite[Proposition]{Masser:Specialization} gave a 
  proof using an effective Nullstellensatz.
  In this reference one must assume, as we do above, 
  that all geometric endomorphisms of $\cA_\eta$ are
  already defined over $\IC(S)$. 
  As a consequence any endomorphism of $\mathcal{\cA}_s$ for 
  $s$ outside a meager subset
  is the specialization of an endomorphism of the generic
  fiber. Then we use that (\ref{eq:specializeend}) is surjective for $s=\eta$. 
\end{proof}

A \emph{coset} in an abelian variety is the translate of an abelian
subvariety by an arbitrary point. 

\begin{lemma}
  \label{lem:abschquot}
  Let $Y$ be an irreducible closed subvariety of $\cA$ with $\pi(Y)=S$. 
  Assume that there is a Zariski open and dense subset $U\subset
  \pi(Y)$ such that for all $s\in U(\IC)$,
  some irreducible component of  $Y_s$ is a coset in
  $\cA_s$.
  There exists $\varphi\in \mathrm{End}(\cA/S)$
  with the following properties:
  \begin{enumerate}
  \item [(i)] We have   $\dim \varphi(Y) = \dim \pi(Y)$.
  \item[(ii)] 
    For all $s\in S(\IC)$ we have $\dim\ker \varphi_s = \dim Y - \dim\pi(Y)$.
  \end{enumerate}
  Moreover, if $\dim Y = \dim \pi(Y)$, then $\varphi$ is the identity.
\end{lemma}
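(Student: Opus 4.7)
The plan is to exhibit an abelian subscheme $\cB_0\subset\cA$ whose fibers are the abelian subvarieties of which the coset components of $Y_s$ are translates, and then to use Poincar\'e reducibility together with endomorphism-genericity to produce an endomorphism of $\cA/S$ that kills $\cB_0$ while being an isogeny on a complement.

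For the first step I would analyze the geometric generic fiber $Y_{\bar\eta}$ of $\pi|_Y$. Since $Y$ is irreducible and $\pi|_Y$ is dominant, $Y_\eta$ is irreducible as an $\IC(\eta)$-scheme, so $Y_{\bar\eta}$ decomposes into finitely many irreducible components that are permuted transitively by $\mathrm{Gal}(\overline{\IC(\eta)}/\IC(\eta))$. After shrinking $U$, these geometric components correspond to the irreducible components of $Y_s$ for $s\in U$ under specialization. By upper semi-continuity of the dimensions of relative stabilizers, if a geometric component of $Y_{\bar\eta}$ specializes to a coset on a Zariski-dense set of $s\in U$, then the component itself is a coset. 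Galois transitivity on components then upgrades this to every geometric component of $Y_{\bar\eta}$ being a coset. The standing assumption that all endomorphisms of $\cA_\eta$ are defined over $\IC(\eta)$ implies the same for abelian subvarieties (every such subvariety is the image of an endomorphism via Poincar\'e reducibility), so all these cosets are translates of a single abelian subvariety $B_\eta\subset\cA_\eta$. Applying \cite[Lemma~2.9]{BarDill} as in the proof of Lemma~\ref{lem:endabscheme} extends $B_\eta$ to an abelian subscheme $\cB_0\subset\cA$ with $\dim\cB_{0,s}=\dim Y-\dim S$ for every $s\in S(\IC)$.

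For the second step I invoke Proposition~\ref{prop:excep} to pick a point $s_0\in U(\IC)\setminus S^{\mathrm{exc}}$. Poincar\'e reducibility applied inside $\mathrm{End}(\cA_{s_0})\otimes\IQ$ yields an idempotent with kernel $\cB_{0,s_0}$; clearing denominators produces an element of $\mathrm{End}(\cA_{s_0})$ which, by the surjectivity in~(\ref{eq:endgeneric}), lifts, possibly after another positive integer multiple, to some $\varphi\in\mathrm{End}(\cA/S)$. By construction $\varphi_{s_0}$ annihilates $\cB_{0,s_0}$ and restricts to an isogeny on a complementary abelian subvariety of $\cA_{s_0}$. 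Lemma~\ref{lem:endabscheme} then says that $\dim\ker\varphi_s=g-\dim\varphi(\cA)+\dim S$ independently of $s$; evaluating at $s_0$ identifies this common dimension with $\dim\cB_{0,s_0}=\dim Y-\dim S$, proving~(ii). For~(i), the map $\pi\circ\varphi|_Y$ is surjective onto $S$, so $\dim\varphi(Y)\geq\dim S$, while at generic $s$ every irreducible component of $Y_s$ is a translate of $\cB_{0,s}\subset\ker\varphi_s$ and is therefore contracted to a single point by $\varphi_s$, giving $\dim\varphi(Y)_s=0$ and hence $\dim\varphi(Y)\leq\dim S$. When $\dim Y=\dim\pi(Y)$ the coset components are points, $\cB_0=0$, and one simply takes $\varphi=\mathrm{id}$, which trivially satisfies both conditions.

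The main obstacle is the first step: one must associate a single abelian subscheme $\cB_0$ with the coset components of $Y_s$ uniformly in $s$, even though the identity of ``the'' coset component among the irreducible components of $Y_s$ may vary with $s$. The standing hypothesis on endomorphisms of $\cA_\eta$ is what canonicalizes this matching and permits the descent of $B_{\bar\eta}$ to $\IC(\eta)$ needed for the Barroero--Dill extension argument.
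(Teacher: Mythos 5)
Your proof is correct, but it takes a genuinely different route than the paper's. The paper proceeds pointwise: for each $s\in U(\IC)\ssm S^{\mathrm{exc}}$ it fixes a coset component $Z_s$ and an endomorphism $\varphi_s$ killing it, extends $\varphi_s$ to $\mathrm{End}(\cA/S)$ using endomorphism genericity, and then invokes the Baire Category Theorem to find a single $\varphi\in\mathrm{End}(\cA/S)$ that works on a Zariski-dense set $\Sigma_\varphi$ of base points; the dimension count is then done by intersecting a generic fiber of $\varphi|_Y$ with some $Z_{s_0}$. You instead construct the abelian subscheme $\cB_0$ explicitly: you pass to the geometric generic fiber $Y_{\bar\eta}$, use a pigeonhole/constructibility argument (the coset condition on a fixed geometric component, over an \'etale cover where the components can be split off, is detected by the upper semi-continuous function $\dim\mathrm{Stab}$, so a Zariski-dense validity forces it generically) to show one, hence by Galois transitivity every, geometric component of $Y_{\bar\eta}$ is a coset of a fixed abelian subvariety $B_\eta$, descend $B_\eta$ to $\IC(\eta)$ via the standing hypothesis on endomorphisms plus Poincar\'e reducibility, spread it out with Barroero--Dill, and then produce $\varphi$ by lifting an integral multiple of the Poincar\'e idempotent at a single endomorphism-generic $s_0$. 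Your approach yields the stronger structural fact that \emph{all} components of $Y_s$ are translates of $\cB_{0,s}$ for generic $s$ (the paper only uses \emph{one} such component per fiber) and makes the abelian subscheme canonical, at the cost of more machinery: irreducibility of $Y_\eta$, constructibility of the coset locus via the relative stabilizer group scheme, the \'etale splitting of geometric components, and Galois descent. The paper's Baire-category argument is more elementary and bypasses these spreading-out issues by pushing the choice down to individual fibers. Both approaches rely on Proposition~\ref{prop:excep} and Lemma~\ref{lem:endabscheme} in the same way at the end.
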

\begin{proof}
  If $\dim Y = \dim\pi(Y)$ we take $\varphi$ to be the identity, the
  conclusions are all true. 
  Otherwise by generic flatness, we may and do replace $U$ by a  Zariski open and dense
  subset such that
  $Y_s$ is equidimensional of dimension $\dim Y-\dim\pi(Y)$ for all
  $s\in U(\IC)$. 
  

  
  Let $s\in U(\IC)\ssm S^{\mathrm{exc}}$.
  We fix an irreducible component $Z_s$ of $Y_s$
  that is a coset in $\cA_s$, necessarily of dimension $\dim Y
  -\dim\pi(Y)$. Next we pick
  $\varphi_s \in \mathrm{End}({\cA_s}/\IC)$ whose kernel contains
  a translate of the said coset as an irreducible component.
  After multiplying $\varphi_s$ by a
  positive multiple it extends to  an
  endomorphism $\varphi$ of $\cA$ as (\ref{eq:endgeneric}) is
  bijective.
 
  For each $\varphi\in\mathrm{End}(\cA/S)$ we define
  $\Sigma_{\varphi}$ to be the set of points $s\in U(\IC)\ssm
  S^{\mathrm{exc}}$ with $\varphi_s=\varphi$.
  We have
  $S(\IC) = (S\ssm U)(\IC)\cup
  S^{\mathrm{exc}} \cup \bigcup_{\varphi\in \mathrm{End}(\cA/S)} \Sigma_{\varphi}$
  
  The set $  S^{\mathrm{exc}}$ is meager in $S$
  by Proposition~\ref{prop:excep} and so is $ (S\ssm U)(\IC)\cup
    S^{\mathrm{exc}}$.
  As  $\mathrm{End}(\cA/S)$ is at most countably infinite,   
  the Baire Category Theorem implies that there exists
  $\varphi\in\mathrm{End}(\cA/S)$ such that the closure of 
  $\Sigma_{\varphi}$ in $\an{S}$ has non-empty interior.
  In particular, $\Sigma_\varphi$ is Zariski dense in $S$. 

  For all $s\in \Sigma_{\varphi}$ we have $\dim Z_s = \dim Y-\dim\pi(Y)$.
  So $Z = \bigcup_{s\in \Sigma_{\varphi}} Z_s$ lies
  Zariski dense in the irreducible $Y$. 

  For all $s\in \Sigma_{\varphi}$, 
  each $Z_s$ is contained in a fiber of $\varphi|_Y$  by our  choice of $\varphi_s$.
  So $Z_s$, being an irreducible component of
  $Y_s$, is an irreducible component of a fiber of $\varphi|_Y$.
  
  Generically, fibers of $\varphi|_Y$ are equidimensional of  dimension $\dim Y -
  \dim\varphi(Y)$.
  So there exists $s_0\in \Sigma_\varphi$ such that $Z_{s_0}$ meets such a (generic) 
  fiber.
  Then 
  $\dim Y - \dim\varphi(Y) = \dim Z_{s_0}= \dim\ker\varphi_{s_0}$.
  Recall that  $Y_{s_0}$ is equidimensional of dimension $\dim Y -
  \dim \pi(Y)$ and has $Z_{s_0}$ as an irreducible component. We conclude
  $\dim Y - \dim\varphi(Y) = \dim Y -\dim\pi(Y)$ and so 
  $\dim\varphi(Y) =
  \dim\pi(Y)$. This implies (i).
  By Lemma~\ref{lem:endabscheme} all  $\ker\varphi_s$ have
  the same dimension, here equal to $\dim Y
  -\dim\pi(Y)$. This concludes (ii). 
\end{proof}

The exceptional set of an irreducible closed subvariety $Y$ of $\cA$ is defined to be
\begin{equation}
  \label{eq:exceptionalset}
  Y^{\mathrm{exc}}:=     
  \{P \in Y(\IC) :  \text{$P$ is  contained in a proper algebraic
    subgroup of $\cA_{\pi(P)}$}\}.
\end{equation}
If $N\in\IZ$ then $[N]$ denotes the multiplication-by-$N$ morphism
$\cA\rightarrow\cA$. 
\begin{lemma}
  \label{lem:alternative}
  Let $Y$ be an irreducible closed subvariety of $\cA$ and 
  let $S' =\pi(Y)^{\mathrm{reg}}$ denote the regular locus of $\pi(Y)$. We have
  one of the following two alternatives.
  \begin{enumerate}
  \item [(i)]
    Either $Y^{\mathrm{exc}}$ is meager in $Y$,
  \item[(ii)] or every $P\in \pi|_Y^{-1}(S')(\IC)$ lies in a proper algebraic
    subgroup of $\cA_{\pi(P)}$.  In this case,
     $\bigcup_{N\in\IN}[N](Y)$ is not Zariski dense in $\pi^{-1}(\pi(Y))$ and if
     $\eta$ is the generic point of $\pi(Y)$, then $Y_\eta$ lies in a
     proper algebraic subgroup of $\cA_\eta$.
  \end{enumerate}
\end{lemma}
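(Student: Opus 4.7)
The plan is to cover the exceptional points of $Y$ by a countable family of endomorphism kernels on a base change, and then to dichotomize according to whether one of these kernels contains all of $Y'$. Set $T := \pi(Y)$, let $Y' := \pi|_Y^{-1}(S')$ be the dense Zariski open subset of $Y$ lying over $S'$, and consider the restricted abelian scheme $\cA_{S'}\to S'$. Since the $\ell$-torsion of $\cA/S$ is $\IC(S)$-rational, its pullback along $T\hookrightarrow S$ is $\IC(S')$-rational, so Proposition~\ref{prop:excep} applies to $\cA_{S'}/S'$ and produces a meager exceptional set ${S'}^{\mathrm{exc}}\subset S'(\IC)$ for $\cA_{S'}/S'$.

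For each nonzero $\varphi\in\mathrm{End}(\cA_{S'}/S')$ and each $N\in\IN$, set
\[
Y'_{\varphi,N} := \{P\in Y'(\IC) : N\varphi_{\pi(P)}(P)=0\},
\]
a Zariski closed subset of $Y'$. The first main step is to prove
\[
Y^{\mathrm{exc}}\cap Y' \subset \pi|_{Y'}^{-1}({S'}^{\mathrm{exc}}) \cup \bigcup_{\varphi\neq 0,\,N\geq 1} Y'_{\varphi,N}.
\]
Given $P$ in the left-hand side with $s:=\pi(P)\notin {S'}^{\mathrm{exc}}$, I choose a proper algebraic subgroup $H\subset \cA_s$ containing $P$. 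Poincar\'e complete reducibility applied to the identity component $H^0\subsetneq \cA_s$ yields a nonzero $\psi_s\in\mathrm{End}(\cA_s/\IC)$ with $\psi_s(H^0)=0$; then $\psi_s(H)$ is finite and $N_0\psi_s(P)=0$ for some $N_0\in\IN$. Endomorphism-genericity of $s$ lifts $\psi_s$ to $\mathrm{End}(\cA_{S'}/S')\otimes\IQ$, and clearing denominators produces a nonzero $\varphi\in\mathrm{End}(\cA_{S'}/S')$ and $N\in\IN$ with $P\in Y'_{\varphi,N}$.

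The dichotomy is then immediate. If $Y'_{\varphi,N}\subsetneq Y'$ for every $(\varphi,N)$, then $Y^{\mathrm{exc}}\cap Y'$ is contained in a countable union of proper Zariski closed subsets of $Y'$; taking closures in $Y$ and adjoining the proper closed subset $Y\ssm Y'$ exhibits $Y^{\mathrm{exc}}$ as meager in $Y$, which is~(i). Otherwise some $Y'_{\varphi,N}=Y'$, so $Y'\subset \ker(N\varphi)$; by Lemma~\ref{lem:endabscheme} applied to the nonzero $N\varphi$, the fiber $\ker(N\varphi)_s$ is a proper algebraic subgroup of $\cA_s$ for every $s\in S'(\IC)$, which proves the main claim of~(ii). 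The Zariski closure of $Y'$ in $\cA_T$ then shows $Y$ lies in the proper closed subset $\overline{\ker(N\varphi)}\cap \cA_T$, which is stable under $[M]$ for every $M\in\IN$, so $\bigcup_{M\in\IN}[M](Y)$ is not Zariski dense in $\pi^{-1}(T)$; specialization at the generic point $\eta$ of $T$ places $Y_\eta$ in the proper algebraic subgroup $\ker(N\varphi)_\eta$ of $\cA_\eta$.

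The hardest step will be the endomorphism-lifting in the second paragraph: passing from a pointwise $\psi_s\in\mathrm{End}(\cA_s/\IC)$ to a genuine $\varphi\in\mathrm{End}(\cA_{S'}/S')$. This requires both the endomorphism-generic condition at $s$, supplied by Proposition~\ref{prop:excep} applied to $\cA_{S'}/S'$, and a verification that the rationality-of-geometric-endomorphisms hypothesis descends along the closed immersion $T\hookrightarrow S$ so that Proposition~\ref{prop:excep} is even available for the base-changed family.
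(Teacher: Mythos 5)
Your proof follows the same route as the paper's: the key step is the inclusion of $Y^{\mathrm{exc}}\cap Y'$ in $\pi|_{Y'}^{-1}({S'}^{\mathrm{exc}})$ together with a countable union of kernels of nonzero endomorphisms of $\cA_{S'}/S'$, followed by the dichotomy between all those intersections being proper (giving meagerness) or one kernel swallowing $Y'$ (giving alternative (ii)). Your extra care about why Proposition~\ref{prop:excep} remains applicable to the base-changed scheme $\cA_{S'}/S'$ is a reasonable point that the paper treats implicitly, but otherwise the argument, including the Poincar\'e reducibility step and the stability of $\overline{\ker(N\varphi)}\cap\cA_T$ under $[M]$, is the same.
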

\begin{proof}
 Let  $Y' = Y\cap \cA_{S'}$.
  Suppose $P\in Y'(\IC)$ is in a proper algebraic subgroup of $\cA_{s}$
  with $s=\pi(P)$. Then there exists $\varphi_s\in
  \mathrm{End}(\cA_s/\IC)\ssm\{0\}$ with $\varphi_s(P)=0$. If $s\not\in
  {S'}^{\mathrm{exc}}$, then by definition some positive multiple of
  $\varphi_s$ extends to an element of $\mathrm{End}(\cA_{S'}/S')\ssm\{0\}$.  
  Therefore,
  \begin{equation}
    \label{eq:exceptionalset2}
    Y^{\mathrm{exc}}\subset \pi|_Y^{-1}(\pi(Y)\ssm S')\cup \pi^{-1}({S'}^{\mathrm{exc}}) \cup \bigcup_{\varphi\in
      \mathrm{End}(\cA_{S'}/S')\ssm\{0\}}
    \mathrm{ker}\,\varphi.
  \end{equation}

  By Proposition~\ref{prop:excep} the set
  $\pi|_Y^{-1}({S'}^{\mathrm{exc}})$
  is meager in $Y$. Moreover, $\pi|_Y^{-1}(\pi(Y)\ssm S')$ is Zariski
  closed and proper in $Y$, and hence its complex points form a meager subset of $Y$.
  Moreover, the last union in 
  (\ref{eq:exceptionalset2}) is over an at most
  countably infinite union of proper algebraic subsets of $\cA_{S'}$.
  
  So if we are not in alternative (i), then there exists $\varphi\in
  \mathrm{End}(\cA_{S'}/S')\ssm\{0\}$ with $Y\subset
  \overline{\ker\varphi}$,  the Zariski closure of $\ker\varphi$ in $\cA$.
  Note that $Y' = Y\cap \cA_{S'} \subset \overline{\ker\varphi}\cap
  \cA_{S'} = \ker\varphi$. 
  Say $P\in Y'(\IC)$, then $P\in \ker\varphi_{\pi(P)}$.
  By Lemma~\ref{lem:endabscheme}, $\ker\varphi_{\pi(P)}$ is a proper
  algebraic subgroup of $\cA_{\pi(P)}$. Finally, $[N](Y')\subset\ker\varphi$ for
  all $N\in\IN$. So $\bigcup_{N\in\IN} [N](Y)$
  lies in $\pi|_Y^{-1}(\pi(Y)\ssm S')\cup \ker\varphi$ and is thus not
  Zariski dense in $\pi^{-1}(\pi(Y))$. 
  Finally, the generic fiber of $Y\rightarrow S$ lies in the generic
  fiber of $\ker\varphi\rightarrow S$, the latter is a proper
  algebraic subgroup of $\cA_\eta$. 
\end{proof}

Here is a useful consequence of the previous lemma. 

\begin{lemma}
  \label{lem:meagersuperset}
  Let $Y\subset \cA$ and $X\subset \cA$ be irreducible closed
  subvarieties with $Y\subset X$ and
  $\pi(Y)\cap\pi(X)^{\mathrm{reg}}\not=\emptyset$.
  If $Y^{\mathrm{exc}}$ is meager in $Y$, then
  $X^{\mathrm{exc}}$ is meager in $X$. 
\end{lemma}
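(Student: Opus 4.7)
The plan is to apply Lemma~\ref{lem:alternative} to $X$ and rule out the second alternative by using the hypothesis that $Y^{\mathrm{exc}}$ is meager in $Y$. I would set $S'_X = \pi(X)^{\mathrm{reg}}$ and invoke Lemma~\ref{lem:alternative} with $X$ in place of $Y$. If alternative (i) holds, then $X^{\mathrm{exc}}$ is meager in $X$ and we are done. Suppose, toward a contradiction, that we are in alternative (ii): every $P\in\pi|_X^{-1}(S'_X)(\IC)$ lies in a proper algebraic subgroup of $\cA_{\pi(P)}$.

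Next, I would exploit the hypothesis $\pi(Y)\cap S'_X\neq\emptyset$. Choose a Zariski open subset $U\subset S$ with $U\cap\pi(X) = S'_X$ and set $W := \pi|_Y^{-1}(U)$. Since $\pi(Y)\subset\pi(X)$ and $\pi(Y)\cap S'_X\neq\emptyset$, the set $W$ is a nonempty Zariski open subset of the irreducible variety $Y$. For any $P\in W(\IC)$ we have $P\in X(\IC)$ with $\pi(P)\in S'_X$, so alternative (ii) applied to $X$ places $P$ in a proper algebraic subgroup of $\cA_{\pi(P)}$; hence $W(\IC)\subset Y^{\mathrm{exc}}$.

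Finally, I would derive the contradiction via a Baire-category argument. The set $W(\IC)$ has nonempty Euclidean interior in $\an{Y}$. Since $Y$ is irreducible, each proper Zariski closed subset of $Y$ is nowhere dense in the Baire space $\an{Y}$, so any at most countably infinite union of such subsets has empty Euclidean interior and therefore cannot contain $W(\IC)$. Consequently, $Y^{\mathrm{exc}}$ is not meager in $Y$, contradicting the hypothesis. The main obstacle is setting up the passage through Lemma~\ref{lem:alternative} so that alternative (ii) for $X$ transfers across $Y\subset X$ via the open set $W$; the closing Baire-category step is routine.
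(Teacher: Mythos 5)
Your proof is correct and follows essentially the same route as the paper's: both invoke Lemma~\ref{lem:alternative} for $X$, observe that if $X^{\mathrm{exc}}$ fails to be meager then alternative (ii) forces every complex point of $Y\cap\pi^{-1}(\pi(X)^{\mathrm{reg}})$ into $Y^{\mathrm{exc}}$, and conclude via the Baire Category Theorem that $Y^{\mathrm{exc}}$ is not meager because it contains a non-empty open subset of $Y^{\mathrm{an}}$. The only differences are cosmetic (contradiction versus contrapositive, and your explicit choice of an open $U\subset S$ with $U\cap\pi(X)=\pi(X)^{\mathrm{reg}}$, which the paper expresses directly as $Y\cap\pi^{-1}(\pi(X)^{\mathrm{reg}})$).
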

\begin{proof}
  If $X^{\mathrm{exc}}$ is not meager in $X$, then by
  Lemma~\ref{lem:alternative} every point $P\in X(\IC)$ with $\pi(P)\in
  \pi(X)^{\mathrm{reg}}$  lies in a proper algebraic subgroup of
  $\cA_{\pi(P)}$. In particular, the complex points of $Y \cap
  \pi^{-1}(\pi(X)^{\mathrm{reg}})$ lie in  $Y^{\mathrm{exc}}$.
  The hypothesis implies that $Y^{\mathrm{exc}}$ contains a non-empty open subset
  of $Y^{\mathrm{an}}$. So 
  $Y^{\mathrm{exc}}$ is not meager in $Y$
  by   the Baire Category Theorem.
\end{proof}

\section{Bi-algebraic Subvarieties and the University Family of Abelian Varieties}

Ullmo and Yafaev~\cite{UYweaklyspecial} characterized the bi-algebraic
subvarieties of (pure) Shimura varieties: they are precisely the
weakly special subvarieties, \textit{i.e.}, the geodesic subvarieties
studied by Moonen~\cite{Moonen}.
For a definition of bi-algebraic subgroup we refer to
$\mathsection$\ref{sec:bialgebraic} below.
The first-named
author~\cite[$\mathsection$3]{Gao:SNS17} gave a complete
characterization of the bi-algebraic subvarieties of $\mathfrak{A}_g$,
based on \cite[$\mathsection$8]{GaoCrelle}. Below in
Proposition~\ref{prop:monodromyaction} we follow the approach
presented in these references but minimize the language of mixed
Shimura varieties. Our main tool is Andr\'e's normality
theorem~\cite{Andre92} for variations of mixed Hodge structures.

\subsection{The Mumford--Tate Group}
\label{sec:MT}

Let $g\ge 1$ and let $\pi\colon \mathfrak{A}_g\rightarrow\IA_g$ be the
universal family of principally polarized $g$-dimensional abelian
varieties with level-$\ell$-structure for some $\ell\ge 3$. Then
$\mathfrak{A}_g$ and $\IA_g$ are geometrically irreducible, smooth
quasi-projective varieties defined over a number field which we assume
is a subfield $\IC$. We consider all varieties as defined over a
subfield of $\IC$, sometimes executing a base change to $\IC$ without
mention.

Let $\mathfrak{H}_g$ denote Siegel's upper half space, \textit{i.e.}, the
symmetric matrices in $\mathrm{Mat}_{g\times g}(\IC)$ with positive definite
imaginary part. 
By abuse of notation we write
\begin{equation*}
  \mathrm{unif}\colon \mathfrak{H}_g
  \rightarrow\IA_g^{\mathrm{an}}\quad\text{and}\quad
  \mathrm{unif}\colon \IC^g\times \mathfrak{H}_g\rightarrow\mathfrak{A}_g^{\mathrm{an}}
\end{equation*}
for both holomorphic uniformizing maps.
Recall that $\mathrm{Sp}_{2g}(\IR)$, the group of real points of the
symplectic group,
acts on $\mathfrak{H}_g$.

We identify $\IR^g\times\IR^g\times \mathfrak{H}_g$
 with
$\IC^g\times \mathfrak{H}_g$ via the natural semi-algebraic bijection
\begin{equation}
  \label{eq:complexstructure}
(\tau,u,v)  \leftrightarrow
(\tau,z)\quad\text{where $z = \tau u + v$.}
\end{equation}
In the former coordinates, the
corresponding uniformizing map $\mathfrak{H}_g
\times\IR^{2g}\rightarrow\mathfrak{A}_g^{\mathrm{an}}$ is real
analytic.

Let $s\in \mathfrak{A}_g(\IC)$ and fix 
$\tau\in\mathfrak{H}_g$ in its preimage under the uniformizing map, \textit{i.e}, $s =
\mathrm{unif}(\tau)$. Let $1_g$ denote the $g\times g$ unit matrix,
then the columns of $(\tau,1_g)$ are an $\IR$-basis of $\IC^g$ and
$\mathfrak{A}_{g,s}^{\mathrm{an}} \cong \IC^g/(\tau\IZ^g +
\IZ^g)$.
The period lattice basis $(\tau,1_g)$ allows us to identify
$H_1(\mathfrak{A}_{g,s}^{\mathrm{an}},\IZ)$ with $\IZ^{g}\times\IZ^g$
and $H_1(\mathfrak{A}_{g,s}^{\mathrm{an}},\IR)$ with
$\IR^g\times\IR^g$.

We briefly recall the monodromy action of
$\pi_1(\IA_g^{\mathrm{an}},s)$, the (topological) fundamental group of
$\IA_g^{\mathrm{an}}$ based at $s$, on singular homology
$H_1(\mathfrak{A}_{g,s}^{\mathrm{an}},\IZ)$.

Suppose $[\gamma] \in \pi_1(\IA_g^{\mathrm{an}},s)$ is represented by
a loop $\gamma$ in $\IA_{g}^{\mathrm{an}}$ based at $s$. Then a lift
$\tilde\gamma$ of $\gamma$ to $\mathfrak{H}_g$ starting at $\tau$ ends
at $M \tau\in \mathfrak{H}_g$ for some $M=\left(
  \begin{array}{cc} a&b \\c & d
  \end{array}\right)\in \mathrm{Sp}_{2g}(\IZ)$. Then $M\tau$ is the
period matrix of the abelian variety $\IC^g/(M\tau\IZ^g+\IZ^g)$ which
is isomorphic to $\IC^g / (\tau\IZ^g+\IZ^g)$. To describe this
isomorphism we need the identity
\begin{equation}
  \label{eq:analyticiso}
  I(M,\tau)^\trans (M\tau,1_g) = (\tau,1_g) M^\trans,  
\end{equation}
where $I(M,\tau)=c\tau+d$, note the transpose and
see \cite[$\mathsection$8.1 and Remark~8.1.4]{CAV}
for a discussion. We rearrange this equation.
The map
\begin{alignat}1
  \label{eq:monodromyiso}
\tau u + v\mapsto (I(M,\tau)^\trans)^{-1}(\tau u+v)=
(M\tau,1_g)(M^\trans)^{-1}
\left(
  \begin{array}{c}
    u \\ v
  \end{array}
\right),
\end{alignat}
here $u,v\in\IR^g$ are column vectors, induces the isomorphism $\IC^g /
(\tau\IZ^g+\IZ^g)\rightarrow\IC^g/(M\tau\IZ^g+\IZ^g)$.

By (\ref{eq:monodromyiso}), the monodromy representation 
expressed in these coordinates is given by
\begin{equation}
  \label{eq:monodromyrep}
  \begin{aligned}
    \rho \colon \pi_1(\IA_g^{\mathrm{an}},s)&\rightarrow   \mathrm{Sp}_{2g}(\IZ)\\
    [\gamma]&\mapsto (M^\trans)^{-1}.
  \end{aligned}
\end{equation}


Next we recall the definition of the Mumford--Tate group in our context.


We continue to assume $\tau\in\mathfrak{H}_g$.
Choose any $M \in \mathrm{Sp}_{2g}(\IR)$
with $\tau = M(\sqrt{-1}\cdot 1_g)$; such an $M$ exists as
$\mathrm{Sp}_{2g}(\IR)$ acts transitively on $\mathfrak{H}_g$. We set
\begin{equation}
  \label{eq:formulaJ}
  J_\tau = (M^{\trans})^{-1} \Omega
  M^{\trans}\quad\text{where}\quad \Omega=\left(
    \begin{array}{cc}
      0 & 1_g \\ -1_g & 0
    \end{array}\right). 
\end{equation}

We claim that $J_\tau$ is independent
of the choice of $M$; it depends only on $\tau$. Indeed,
if $M'$ is a further element of $\mathrm{Sp}_{2g}(\IR)$ with $\tau =
M' (\sqrt{-1} \cdot 1_g)$, then $M = M' N$ where $N\in
\mathrm{Sp}_{2g}(\IR)$ stabilizes $\sqrt{-1}\cdot 1_g$. So $N$ is of
the form $\left(
  \begin{array}{cc} a & b \\ -b & a
  \end{array}\right)$ where $a,b\in \mat{g}{\IR}$. This implies
$(N^\trans)^{-1} \Omega N^\trans = \Omega$ and so $({M'}^\trans)^{-1} \Omega
{M'}^\trans=J_\tau$ on substituting $M'=MN^{-1}$.

Say $x,y\in\IR$, then
\begin{equation*}
  (x1_{2g}+yJ_\tau)^\trans \Omega (x1_{2g}+yJ_\tau) = x^2 \Omega +y^2 J_\tau^\trans \Omega J_\tau +
  xy(\Omega J_\tau + J_\tau^\trans\Omega). 
\end{equation*}
The group $\mathrm{Sp}_{2g}(\IR)$ contains $\Omega$ and is mapped to
itself by matrix transposition. Hence $J_\tau\in \mathrm{Sp}_{2g}(\IR)$.
Moreover, $J_\tau^2 = -1_{2g}$.
So $J_\tau^\trans \Omega J_\tau=\Omega$
and $J_\tau^\trans \Omega = \Omega J_\tau^{-1} = -\Omega J_\tau$. We conclude $h_\tau(z) = x 1_{2g}+y J_\tau \in
\mathrm{GSp}_{2g}(\IR)$ for all
$z=x+\sqrt{-1}y\in\IC^\times=\IC\ssm\{0\}$ where $x,y\in\IR$.
Moreover, 
\begin{equation*}
  h_\tau \colon \IC^\times \rightarrow\mathrm{GSp}_{2g}(\IR)
\end{equation*}
is a group homomorphism.

By (\ref{eq:formulaJ}) we have $J_\tau^\trans \tau = \tau$. 
Below we use the well-known identity $I(MM',\tau)=I(M,M'\tau)I(M',\tau)$ for all $M,M'\in
\mathrm{Sp}_{2g}(\IR)$ and all $\tau\in\mathfrak{H}_g$.
We apply  (\ref{eq:analyticiso}) to $J_\tau^\trans$
where $J_\tau = (M^\trans)^{-1}\Omega M^\trans$ and $\tau = M(\sqrt{-1}\cdot 1_g)$
and compute
\begin{alignat*}1
  (\tau,1_g)J_\tau &= I(J_\tau^\trans,\tau)^\trans (\tau,1_g) \\
              &= I(-M\Omega M^{-1},\tau)^\trans(\tau,1_g) \\
              &=  \left( I(-M\Omega,M^{-1}\tau) I(M^{-1},\tau)\right)^\trans  (\tau,1_g) \\
  &=-\left(I(M\Omega,\sqrt{-1} \cdot 1_g)I(M^{-1},\tau)\right)^\trans (\tau,1_g)\\
  &=-\left(I(M,\Omega(\sqrt{-1} \cdot 1_g))I(\Omega,\sqrt{-1}\cdot
    1_g)I(M^{-1},\tau)\right)^\trans(\tau,1_g).
\end{alignat*}
Next we use $\Omega(\sqrt{-1} \cdot 1_g) = \sqrt{-1}\cdot 1_g$. Hence
\begin{equation}
  \label{eq:Jtauisi}
  \begin{aligned}
    (\tau,1_g)J_\tau&=\sqrt{-1}\left(I(M,\sqrt{-1} \cdot 1_g)I(M^{-1},\tau)\right)^\trans(\tau,1_g)\\
    &= \sqrt{-1}\left(I(M,M^{-1}\tau)I(M^{-1},\tau)\right)^\trans(\tau,1_g)\\
    &= \sqrt{-1}I(1_{2g},\tau)^\trans(\tau,1_g)\\
    &=\sqrt{-1}(\tau,1_g).
  \end{aligned}
\end{equation}
So $J_\tau$ represents multiplication by
$\sqrt{-1}$ in the real coordinates determined by the $\IR$-basis $(\tau,1_g)$ of
$\IC^g$.

Let $s\in\IA_g(\IC)$ lie below $\tau\in\mathfrak{H}_g$. 
The \emph{Mumford--Tate group} $\mathrm{MT}(\mathfrak{A}_{g,s})$ of  $\mathfrak{A}_{g,s}$ is the
smallest algebraic subgroup of $\mathrm{GSp}_{2g,\IQ}$
whose group of  $\IR$-points contains $h_\tau(\IC^\times)$. 
As $J_\tau = h_\tau(\sqrt{-1})$ we certainly have
$J_\tau\in \mathrm{MT}(\mathfrak{A}_{g,s})(\IR)$. 


\subsection{Bi-algebraic Subvarieties}
\label{sec:bialgebraic}

We keep the conventions introduced in the beginning of
$\mathsection$\ref{sec:MT}. 
An irreducible closed subvariety $Y\subset\mathfrak{A}_g$ is called
\emph{bi-algebraic}, if some (or equivalently any) complex analytic irreducible component
of $\mathrm{unif}^{-1}(Y^{\mathrm{an}})$ equals an irreducible
component of $\tilde Y(\IC)\cap
(\IC^g\times \mathfrak{H}_g)$ for an algebraic subset
$\tilde Y\subset \IG_{\mathrm{a},\IC}^g\times \mathrm{Mat}_{g\times
  g,\IC}$.
All irreducible components of the intersection of $2$ bi-algebraic subvarieties of $\mathfrak{A}_g$
are bi-algebraic. So any
 irreducible closed subvariety $Y$ of $\mathfrak{A}_g$ is contained
in a bi-algebraic subvariety $Y^{\mathrm{biZar}}$ of
$\mathfrak{A}_g$ that is minimal with respect to inclusion.

Bi-algebraic subvarieties of $\IA_g$ are defined in a similar manner.
By a theorem of Ullmo--Yafaev~\cite[Theorem 1.2]{UYweaklyspecial}, the bi-algebraic subvarieties
of $\IA_g$ are precisely the weakly special subvarieties of $\IA_g$;
here we consider
$\IA_g$ as a Shimura variety. 
For any irreducible closed subvariety $Y$ of $\IA_g$, we use 
$Y^{\mathrm{biZar}}$ to denote the minimal bi-algebraic subvariety
containing $Y$.

\begin{lemma}
  \label{lem:YbiZarfiber}
  Let $Y\subset \mathfrak{A}_{g}$
  be an irreducible closed
  subvariety that is bi-algebraic and
  let $\eta\in \pi(Y)$ be the  generic point.
  For all $P\in Y(\IC)$, each irreducible component of 
  $Y_{\pi(P)}$ is a coset in
  $\mathfrak{A}_{g,\pi(P)}$ of
  dimension at least $\dim Y - \dim \pi(Y)$.
\end{lemma}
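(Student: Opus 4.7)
The plan is to exploit bi-algebraicity of $Y$ to reduce, fiber by fiber, to the classical characterization of bi-algebraic subvarieties of a single complex abelian variety as cosets of abelian subvarieties; the dimension lower bound will then come for free from the fiber dimension theorem. Concretely, I would first fix $P\in Y(\IC)$, set $s=\pi(P)$, and choose $\tau_0\in\mathfrak{H}_g$ with $\mathrm{unif}(\tau_0)=s$. Restricting $\mathrm{unif}$ to the affine slice $\IC^g\times\{\tau_0\}$ yields the universal cover $\mathrm{unif}_s\colon\IC^g\to\mathfrak{A}_{g,s}^{\mathrm{an}}=\IC^g/(\tau_0\IZ^g+\IZ^g)$, which satisfies
\[
\mathrm{unif}_s^{-1}(Y_s^{\mathrm{an}})=\mathrm{unif}^{-1}(Y^{\mathrm{an}})\cap(\IC^g\times\{\tau_0\}).
\]

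Next I would transfer bi-algebraicity of $Y$ to each irreducible component of $Y_s$. By hypothesis, every complex analytic irreducible component of $\mathrm{unif}^{-1}(Y^{\mathrm{an}})$ is a component of $\tilde Y(\IC)\cap(\IC^g\times\mathfrak{H}_g)$ for some algebraic subset $\tilde Y\subset\IG_{\mathrm{a},\IC}^g\times\mathrm{Mat}_{g\times g,\IC}$. Intersecting with the affine-linear slice $\IC^g\times\{\tau_0\}$ produces an algebraic subset of $\IC^g$, so each complex analytic irreducible component of $\mathrm{unif}_s^{-1}(Y_s^{\mathrm{an}})$ is algebraic. Hence each irreducible component $W$ of $Y_{\pi(P)}$ is a bi-algebraic subvariety of the abelian variety $\mathfrak{A}_{g,s}$.

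I would then invoke the classical fact, essentially due to Ax, that a closed irreducible bi-algebraic subvariety of a complex abelian variety is exactly a coset of an abelian subvariety; applied to $\mathfrak{A}_{g,s}$, this identifies $W$ as such a coset, proving the first assertion of the lemma. The dimension lower bound $\dim W\ge\dim Y-\dim\pi(Y)$ is then immediate from the fiber dimension theorem applied to the dominant morphism $\pi|_Y\colon Y\to\pi(Y)$, which guarantees that every irreducible component of every fiber has dimension at least $\dim Y-\dim\pi(Y)$.

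The main technical point to confirm would be the slicing step: one must check that the complex analytic components of the full preimage restrict, upon slicing at $\tau=\tau_0$, to cover the analytic components of $\mathrm{unif}_s^{-1}(Y_s^{\mathrm{an}})$, and that each such slice is truly algebraic rather than merely analytic. This should reduce to standard facts about irreducible components of algebraic subvarieties of affine space intersected with a linear subspace, and is the only place where a careful local analysis is required; no appeal to the language of mixed Shimura varieties is needed for this lemma.
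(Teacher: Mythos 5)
Your proposal takes essentially the same route as the paper: identify each irreducible component of the fiber $Y_{\pi(P)}$ as a bi-algebraic subset of the abelian variety $\mathfrak{A}_{g,\pi(P)}$, invoke the Ullmo--Yafaev / Ax characterization that bi-algebraic subvarieties of an abelian variety are cosets, and obtain the dimension bound from the fiber dimension theorem (the paper cites \cite[Corollary 14.116, Remark 14.117]{GoertzWedhorn}). The paper states the fiber-by-fiber bi-algebraicity without elaboration and then cites \cite[Proposition~5.1]{UYweaklyspecial}; your slicing argument is a sketch of the justification the paper suppresses, and you correctly flag as the delicate point that the slice of a complex-analytic irreducible component need not be a union of algebraic components without a short additional argument (for instance, comparing Zariski closures of the sliced analytic components against the algebraic slice $\tilde Y(\IC)\cap(\IC^g\times\{\tau_0\})$, and using that those Zariski closures again lie over $\{\tau_0\}$ and hence inside $Y_s$).
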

\begin{proof}   
  Let $P\in Y^{\mathrm{biZar}}(\IC)$ and let  $C$ be an irreducible component of
  $Y^{\mathrm{biZar}}_{\pi(P)}$.  By \cite[Corollary 14.116
  and Remark 14.117]{GoertzWedhorn} we have
  $\dim C \ge \dim Y - \dim \pi(Y)$.

  The irreducible component $C$ of $Y_{\pi(P)}$ is a bi-algebraic
  subset of the abelian variety $\mathfrak{A}_{g,\pi(P)}$. The lemma
  follows as by
  \cite[Proposition~5.1]{UYweaklyspecial}, $C$ is a coset in the ambient
  abelian variety.
\end{proof}

We now come to a structural  result of bi-algebraic subsets. We refer
to the first-named author's more comprehensive result in \cite{Gao:SNS17} (the statement of \cite[Proposition~3.3]{Gao:SNS17} contains a mistake; for a correct version see \cite[Proposition~5.3]{GaoBettiRank}) using the language of mixed Shimura varieties.

\begin{proposition}
  \label{prop:monodromyaction}
  Let $Y\subset\mathfrak{A}_g$ be a bi-algebraic subvariety with
  $Y^{\mathrm{exc}}$ meager in $Y$.
  \begin{enumerate}
  \item [(i)]
    There is a vector space $W\subset\IR^{2g}$ defined over $\IQ$ with
    $\dim W = 2(\dim Y - \dim\pi(Y))$ with the following property. 
    For 
    all $s=\mathrm{unif}(\tau)\in \pi(Y)(\IC)$, with $\tau\in\mathfrak{H}_g$, the fiber $Y_s$ is a finite union of
    translates of  $\mathrm{unif}(\{\tau\}\times W) \subset
    \mathfrak{A}_{g,s}^{\mathrm{an}}$ which is an abelian variety $C_s$
  \item[(ii)]
    The quotient abelian varieties $\mathfrak{A}_{g,s}/C_s$ are pairwise isomorphic abelian
    varieties for all $s\in\pi(Y)(\IC)$.    
  \end{enumerate}
\end{proposition}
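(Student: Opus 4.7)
First I would apply Lemma~\ref{lem:abschquot} to $Y$. Since $Y$ is bi-algebraic with $Y^{\mathrm{exc}}$ meager, Lemma~\ref{lem:YbiZarfiber} ensures that over a Zariski open dense subset of $\pi(Y)^{\mathrm{reg}}$ each irreducible component of $Y_s$ is a coset of dimension $\dim Y-\dim\pi(Y)$ in $\mathfrak{A}_{g,s}$. Lemma~\ref{lem:abschquot} then yields a Zariski open dense $S\subset\pi(Y)^{\mathrm{reg}}$ and $\varphi\in\mathrm{End}(\mathfrak{A}_g|_S/S)$ with $\dim\ker\varphi_s=\dim Y-\dim\pi(Y)$ for every $s\in S(\IC)$; inspection of the proof of that lemma shows that each $Y_s$ is a finite union of translates of $C_s:=\cC_s$, where $\cC\subset\mathfrak{A}_g|_S$ denotes the identity component of $\ker\varphi$, an abelian subscheme of relative dimension $\dim Y-\dim\pi(Y)$.

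Next I would read off $\cC$ in uniformization coordinates. For $\tau\in\mathfrak{H}_g$ with $\mathrm{unif}(\tau)=s\in S(\IC)$, the preimage of $C_s^{\mathrm{an}}$ in $\IC^g\times\{\tau\}$ is a complex Lie subgroup of $\IC^g$ containing a full-rank sublattice of $\tau\IZ^g+\IZ^g$; in the real coordinates~(\ref{eq:complexstructure}) this takes the form $\{\tau\}\times W_\tau$ with $W_\tau\subset\IR^{2g}$ a $\IQ$-rational subspace of dimension $2(\dim Y-\dim\pi(Y))$. To show $W_\tau=W$ is independent of $\tau$, I would use that $\cC\rightarrow S$ provides a sub-local system of $R_1\pi_*\IZ$ on $\an{S}$; its pullback to any connected component of $\mathrm{unif}^{-1}(S)\subset\mathfrak{H}_g$ is trivial with fixed fiber $W\subset\IR^{2g}$, and the monodromy formula~(\ref{eq:monodromyrep}) makes this compatible with the moving basis $(\tau,1_g)$. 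To extend (i) from $S$ to all of $\pi(Y)(\IC)$, I would use bi-algebraicity: for any $\tau\in\mathrm{unif}^{-1}(\pi(Y)(\IC))$ the slice $\tilde Y\cap(\IC^g\times\{\tau\})$ is algebraic with components that are cosets by Lemma~\ref{lem:YbiZarfiber}, and the translation directions of these cosets depend algebraically on $\tau$, so equality with $W$ on the dense open $\mathrm{unif}^{-1}(S)$ forces equality everywhere.

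For (ii), the main obstacle, I would apply Andr\'e's normality theorem~\cite{Andre92} for variations of mixed Hodge structures to the admissible variation on $\pi(Y)^{\mathrm{biZar}}$ naturally attached to $\mathfrak{A}_g|_{\pi(Y)^{\mathrm{biZar}}}$, together with the sub-variation coming from $\cC$. The bi-algebraicity of $Y$ inside $\mathfrak{A}_g$ itself (not merely of $\pi(Y)$ inside $\IA_g$) corresponds to $Y$ being weakly special in the mixed Shimura sense, and Andr\'e's theorem then constrains the algebraic monodromy to preserve the sub-structure defined by $\cC$ and to act with finite image on the pure quotient corresponding to $\cB:=\mathfrak{A}_g|_S/\cC$. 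By the theorem of the fixed part this forces $\cB\rightarrow S$ to be isotrivial, so the fibers $\mathfrak{A}_{g,s}/C_s$ are pairwise isomorphic for $s\in S(\IC)$, and Zariski closure extends this to all $s\in\pi(Y)(\IC)$. The genuinely hard step is this Hodge-theoretic one: the work leading to (i) is essentially formal given Lemma~\ref{lem:abschquot} and the uniformization, whereas extracting isotriviality from the bi-algebraicity of $Y$ is precisely the place where \cite[Proposition~3.3]{Gao:SNS17} and \cite[Proposition~5.3]{GaoBettiRank} invoke the machinery of mixed Shimura varieties that the present paper seeks to minimize, so a careful bookkeeping of which aspects of André's theorem are genuinely needed will be the core technical challenge.
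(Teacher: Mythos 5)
Your approach to part (i) via $\ker\varphi$ from Lemma~\ref{lem:abschquot} is a genuinely different route from the paper, which defines $W$ intrinsically as the kernel of the projection $\mathrm{pr}$ restricted to the connected algebraic monodromy group $\mathrm{Mon}(Y^{\mathrm{reg}})$. Your idea of deriving the constancy of $W_\tau$ from the sub-local system attached to $(\ker\varphi)^0$ together with the transition rule (\ref{eq:monodromyrep}) is plausible. However, two claims in this part are underargued: (a) that ``inspection of the proof'' of Lemma~\ref{lem:abschquot} shows \emph{every} fiber $Y_s$ is a finite union of translates of $(\ker\varphi_s)^0$ --- that proof only produces \emph{one} component of \emph{one} fiber; converting this into the statement for all $s\in S(\IC)$ requires the kind of equidimensionality-plus-irreducibility argument the paper performs inside Lemma~\ref{lem:predegenerate}(ii), which itself cites Proposition~\ref{prop:monodromyaction}, so you need to redo that step carefully to avoid circularity; and (b) your extension from $S$ to all of $\pi(Y)(\IC)$ by asserting that the ``translation directions depend algebraically on $\tau$'' is hand-waving; the paper instead proves $(\mathrm{Mon}(Z)(\IR)^+\cdot\tau_0)\times(v_0+W(\IR))=\tilde Y_0$ directly by an invariance-of-domain argument.

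The real gap is in part (ii). Your proposal applies Andr\'e's normality theorem to ``the admissible variation on $\pi(Y)^{\mathrm{biZar}}$ naturally attached to $\mathfrak{A}_g$\ldots together with the sub-variation coming from $\cC$.'' This is not a \emph{mixed} variation; it is a pure weight $-1$ variation together with a sub-variation, and Andr\'e's normality applied to it yields only the well-known normality of $\mathrm{Mon}(\pi(Y)^{\mathrm{reg}})$ in the pure Mumford--Tate group, which does not see the subvariety $Y$ inside $\mathfrak{A}_g$ at all. The paper builds a genuine mixed Hodge structure by cutting a curve $\mathbf{x}\subset Y^{\mathrm{reg}}$ and, after a finite base change, regarding it as a one-motive (a section of an abelian scheme). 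Applying Andr\'e's normality to $\mathrm{MT}(\mathbf{x})$ is where the unipotent part enters. Furthermore, the conclusion you need --- that $\cB\to S$ is isotrivial --- is obtained in the paper by showing that $W^\perp$ lies in the fixed part of the monodromy, and this is precisely where the hypothesis that $Y^{\mathrm{exc}}$ is meager is indispensable: it allows one to choose $P$ not contained in any proper algebraic subgroup of $\mathfrak{A}_{g,\pi(P)}$, and then \cite[Proposition~1]{Andre92} identifies the kernel of $\mathrm{MT}(\mathbf{x})\to\mathrm{MT}(\mathfrak{A}_{g,s})$ as the full $\IG_{\mathrm{a},\IQ}^{2g}$, which in turn forces $M*v-v\in W(\IC)$ for all $v$ and hence triviality of the $\mathrm{Mon}(Z)$-action on $W^\perp$. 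Your sketch never uses the meagerness hypothesis, never invokes Andr\'e's Proposition~1, and simply asserts that ``Andr\'e's theorem then constrains the algebraic monodromy to \ldots act with finite image on the pure quotient,'' which is exactly the conclusion, not an argument. Finally, the fallback you offer --- that $Y$ is weakly special in the mixed Shimura sense --- is precisely the characterization \cite[Corollary~8.3]{GaoCrelle} that this paper is written to avoid; invoking it defeats the stated purpose and is in any case not a proof of (ii) but a citation of it.
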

Each $\tau \in \mathfrak{H}_g$ 
endows $\IR^{2g}$ with the structure of a $\IC$-vector space,
multiplication by $\sqrt{-1}$ is represented by $J_\tau$ from (\ref{eq:formulaJ}). The
subspace $W\subset\IR^{2g}$ from part (i) is a $\IC$-vector space for
all $\tau$ in question. The image  $\mathrm{unif}(\{\tau\}\times W)$
is an abelian subvariety of $\mathfrak{A}_{g,s}$ of dimension $\dim Y
- \dim \pi(Y)$. In particular, $Y\rightarrow\pi(Y)$ is
equidimensional.

\begin{proof}
  If $\pi(Y)$ is a point, say $s\in \IA_g(\IC)$, then $Y_s$ is a coset
  in $\mathfrak{A}_{g,s}$ by Lemma~\ref{lem:YbiZarfiber}. The
  proposition holds in this case. 

  We will now assume $\dim\pi(Y)\ge 1$. 
  We identify $\IR^{2g}\times\mathfrak{H}_g$ with the universal
  covering of $\mathfrak{A}_{g}(\IC)$; sometimes alluding to the
  complex structure induced by (\ref{eq:complexstructure}). The
  fundamental group of $\mathfrak{A}_g^{\mathrm{an}}$ based at some
  point $P$  is a
  subgroup of $\IZ^{2g}\rtimes \mathrm{Sp}_{2g}(\IZ)$. The element
  $(M,\omega)$ acts by
  \begin{equation*}
    \left(M\tau, M*u+\omega
    \right)
  \end{equation*}
  on $(\tau,u)$; where $M*u = (M^{\trans})^{-1}u$.

  Recall that the  ambient variety $\mathfrak{A}_g$ is
  quasi-projective and so is $Y$. By Bertini's Theorem a general
  linear space of codimension $\dim Y - 1$
  intersected with $Y^{\mathrm{reg}}$ is a smooth, irreducible curve
  $\mathbf{x}$ that is quasi-finite over $\pi(\mathbf{x})$.  
  A suitable version of Lefschetz's Theorem for the topological
  fundamental group we may also assume that
  the homomorphism
  \begin{equation}
    \label{eq:lefschetz}
    \pi_1(\mathbf{x}^{\mathrm{an}},P)\rightarrow \pi_1(Y^{\mathrm{reg,an}},P)
  \end{equation}
  induced by the inclusion $\mathbf{x}\rightarrow Y^{\mathrm{reg,an}}$
  is surjective for all $P\in \mathbf{x}(\IC)$; see \cite[Lemme
  1.4]{Deligne:bourbaki81}.
  We may fix  $P$ in very general position. For
  example, $P$ is not contained in a proper algebraic subgroup of
  $\mathfrak{A}_{g,s}$ for $s=\pi(P)$. If we replace $\mathbf{x}$ by a
  Zariski open and dense subset, the image of the induced homomorphism
  has finite index in $\pi_1(Y^{\mathrm{reg,an}},P)$, \cite[Lemme
  4.4.17]{Deligne:Hodge2}. This suffices for us. So we
  may assume that $\pi|_{\mathbf{x}} \colon \mathbf{x}\rightarrow \pi(\mathbf{x})$
  is finite and \'etale.

  Let $\Gamma$ denote the image of $\pi_1(\mathbf{x}^{\mathrm{an}},P)$
  in $\IZ^{2g}\rtimes \mathrm{Sp}_{2g}(\IZ)$.
  Let $\mathrm{Mon}(\mathbf{x})$ be the neutral component of the Zariski closure
  of $\Gamma$ in $\IG_{\mathrm{a},\IQ}^{2g}\rtimes \mathrm{Sp}_{2g,\IQ}$
  and let $\mathrm{Mon}(Y^{\mathrm{reg}})$ be the neutral component of
  the Zariski closure of the image of
  $\pi_1(Y^{\mathrm{reg,an}},P)$.  
  We call  $\mathrm{Mon}(\mathbf{x})$ the \emph{connected algebraic monodromy group} of
  $\mathbf{x}$. By the surjectivity of (\ref{eq:lefschetz}) and the
  discussion below we have
  \begin{equation}
    \label{eq:MonxMonY}
    \mathrm{Mon}(\mathbf{x})=\mathrm{Mon}(Y^{\mathrm{reg}}).
  \end{equation}

  By Lemma~\ref{lem:YbiZarfiber} 
  we have $P\in C(\IC)$ where $C$ is an irreducible component of  
  $ Y_s$ and  a coset in $\mathfrak{A}_{g,s}$ with $\dim C\ge \dim Y -
  \dim\pi(Y)$. Now $C\cap Y^{\mathrm{reg}}$ is Zariski dense and open
  in $C$. So the image of  $\pi_1(C^{\mathrm{an}}\cap Y^{\mathrm{reg,an}},P)$ in
  $\pi_1(C^{\mathrm{an}},P)$, induced
  by inclusion, has
  finite index. But
  $\pi_1(C^{\mathrm{an}},P)$  can be identified with a
  subgroup of $\IZ^{2g}\cong H_1(\mathfrak{A}_{g,s}^{\mathrm{an}},\IZ)$ of rank $2\dim C\ge 2 (\dim Y-\dim\pi(Y))$.

  The kernel of the  projection
  $\mathrm{pr}\colon \IG_{\mathrm{a},\IQ}^{2g}\rtimes \mathrm{Sp}_{2g,\IQ}\rightarrow
  \mathrm{Sp}_{2g,\IQ}$
  restricted to 
  $\mathrm{Mon}(Y^{\mathrm{reg}})$
  is an algebraic subgroup of  $\IG_{\mathrm{a},\IQ}^{2g}\times \{1_{2g}\}$. So it is
  $\{1_{2g}\}\times W$ with $W$ a linear subspace of $\IG_{\mathrm{a},\IQ}^{2g}$. By
  the previous paragraph and by (\ref{eq:MonxMonY}) we have
  \begin{equation}
    \label{eq:dimWlb}
    \dim W \ge 2 (\dim Y - \dim\pi(Y)).
  \end{equation}

  Let $Z = \pi(Y)^{\mathrm{reg}}$.
  Then there is a natural representation 
  $\pi_1(Z^{\mathrm{an}},s)\rightarrow\mathrm{Sp}_{2g}(\IZ)$.
  The connected algebraic monodromy group  $\mathrm{Mon}(Z)\subset\mathrm{Sp}_{2g,\IQ}$ is
  the neutral component of the Zariski closure of the image
  of $\pi_1(Z^{\mathrm{an}},s)$.

  Note that $\mathrm{pr}(\mathrm{Mon}(Y^{\mathrm{reg}})) =
  \mathrm{Mon}(Z)$  by \cite[Lemme
  4.4.17]{Deligne:Hodge2}.

  Let $M \in \mathrm{Mon}(Z)(\IC)$.
  The preimage $\mathrm{pr}|_{\mathrm{Mon}(Y^{\mathrm{reg}})}^{-1}(M)$ is $(M,\psi(M)+W)$ where $\psi(M)$
  is a unique complex point of
  $\IG_{\mathrm{a},\IQ}^{2g}/W$.  
  For all $M,M'\in \mathrm{Mon}(Z)(\IC)$ we have
  \begin{equation*}
    (M,\psi(M)+W)(M',\psi(M')+W) =
    (MM',\psi(M)+M*\psi(M')+W) = (MM',\psi(MM')+W).
  \end{equation*}
  So $\psi \colon \mathrm{Mon}(Z)\rightarrow\IC^{2g}/W(\IC)$ is a cocycle.
  It must be a coboundary as $\mathrm{Mon}(Z)$ is semi-simple or trivial,
  by work of Deligne
  \cite[Corollaire~4.2.9(a)]{Deligne:Hodge2}.
  Hence there exists 
  $v_0\in \IC^{2g}$ with
  \begin{equation*}
    \psi(M) = M*v_0-v_0+W\quad\text{for all}\quad M\in \mathrm{Mon}(Z)(\IC).    
  \end{equation*}

  Let $\tilde Y$ be an algebraic subset of
  $\IG_{\mathrm{a},\IC}^g\times\mathrm{Mat}_{g\times
    g,\IC}$ such that 
  the preimage of $Y(\IC)$ in $\IC^{g}\times\mathfrak{H}_g$ and 
  $\tilde Y(\IC) \cap
  (\IC^g\times \mathfrak{H}_g)$ have a common complex analytic
  irreducible component, say $\tilde Y_0$. We have $\dim \tilde Y_0 =
  \dim Y$.

  Suppose $\tau_0\in\mathfrak{H}_g$ lies above $s$ and
  $(\tau_0, u_0)\in \tilde Y_0$ lies above $P$. 
  We return to real coordinates, so $u_0\in\IR^{2g}$. 

  An element $[\gamma]\in \Gamma$ is represented by a loop $\gamma$ in
  $\mathbf{x}^{\mathrm{an}}$ based at $P$. We lift $\gamma$ to an arc in
  $\IR^{2g}\times\mathfrak{H}_g$ starting at $(\tau_0,u_0) \in \tilde
  Y(\IC)$. 
  In particular, the end point of the lift lies in $\tilde Y_0$ and
  equals $(M,u)(\tau_0,u_0)$ with
  $(M,u)=[\gamma]\in\Gamma$.
  For the orbit of $(\tau_0,u_0)$ under $\Gamma$ we have
  \begin{equation*}
    \Gamma \cdot (\tau_0,u_0)\subset \tilde Y_0\subset
    \tilde Y(\IC).
  \end{equation*}




  By definition $\tilde Y(\IC)$ is an algebraic subset of
  $\mathrm{Mat}_{g\times g}(\IC)\times\IC^{g}$. As $P$ is a regular
  point of $Y$ we have that $(\tau_0,u_0)$ is a regular point of
  $\tilde Y_0$. So
  \scriptsize
  \begin{equation}
    \label{eq:algebraicorbit}
    \mathrm{Mon}(Y^{\mathrm{reg}})(\IR)^+ \cdot (\tau_0,u_0)=
    \mathrm{Mon}(\mathbf{x})(\IR)^+ \cdot (\tau_0,u_0) = \{(M,u)(\tau_0,u_0) :
    (M,u)\in     \mathrm{Mon}(\mathbf{x})(\IR)^+ \} \subset \tilde
    Y_0; 
  \end{equation}  
  \normalsize
  the superscript $+$ signals taking the neutral component in the
  Euclidean topology.
  
  In addition to the connected algebraic monodromy group, we have the
  corresponding Mumford--Tate group. 

  First, recall that $s\in \pi(Y)(\IC)$ determines a principally polarized
  abelian variety $\mathfrak{A}_{g,s}$ defined over $\IC$. We consider
  its Mumford--Tate group $\mathrm{MT}(\mathfrak{A}_{g,s})$ coming
  from the corresponding weight $-1$ pure Hodge structure; it is a
  reductive algebraic group. Moreover,
  $\mathrm{MT}(\mathfrak{A}_{g,s})$ is naturally an algebraic subgroup
  of $\mathrm{GSp}_{2g,\IQ}$.

  Second, after a finite and \'etale base change, which is harmless
  for our investigations, $\mathbf{x}$ becomes a section of an abelian
  scheme. It is a good, smooth one-motive (of rank $\le 1$) in the sense
  of Deligne; see \cite[$\mathsection$4 and Lemma~5]{Andre92}. Attached
  to $\mathbf{x}$ is a variation of mixed Hodge structures. 
  Restricted to each point of $\mathbf{x}$ we obtain a mixed Hodge
  structure. By \cite[$\mathsection$4 and Lemma~5]{Andre92} the mixed Hodge structure thus obtained
  has the same the Mumford--Tate group  for all sufficiently 
  general points in  $\mathbf{x}$. We denote this group by
  $\mathrm{MT}(\mathbf{x})$. We may assume $P$ to be such a very general
  point. Then $\mathrm{MT}(\mathbf{x})$ is naturally an algebraic
  subgroup of $\IG_{\mathrm{a}}^{2g}\rtimes \mathrm{GSp}_{2g,\IQ}$.

  We also write $\mathrm{pr}$ for the projection
  $\IG_{\mathrm{a},\IQ}^{2g}\rtimes \mathrm{GSp}_{2g,\IQ}
  \rightarrow \mathrm{GSp}_{2g,\IQ}$. By
\cite[Lemma~2(c)]{Andre92} we have surjectivity
  $\mathrm{pr}(\mathrm{MT}(\mathbf{x})) =
  \mathrm{MT}(\mathfrak{A}_{g,s})$.



  Andr\'e \cite[Theorem~1]{Andre92} proves that
  $\mathrm{Mon}(\mathbf{x})$ is a normal subgroup of
  $\mathrm{MT}(\mathbf{x})$ as $P$ is in very general position. 
  We do not require the statement that $\mathrm{Mon}(\mathbf{x})$ is
  in fact a normal subgroup of the derived Mumford--Tate group.

  Before moving on, we make the following remark. The second remark on \cite[page~11]{Andre92} suggests that we
  could work
  directly with $Y^{\mathrm{reg}}$, \textit{i.e.}, without bypassing to
  $\mathbf{x}$.

  Let $M\in \mathrm{Mon}(Z)(\IC)$ be arbitrary. Then
  $(M,M*v_0-v_0+W)\subset \mathrm{Mon}(\mathbf{x})(\IC)$. 
  Suppose $(h,v)\in \mathrm{MT}(\mathbf{x})(\IC)$. By Andr\'e's
  Theorem, we have
  \begin{equation*}
    (h,v) (M,M*v_0-v_0+W) (h,v)^{-1} \subset
    \mathrm{Mon}(\mathbf{x})(\IC),
  \end{equation*}
  so
  \begin{equation*}
    (hMh^{-1},v+hM*v_0-h*v_0 -hMh^{-1}*v+h*W)\subset
    \mathrm{Mon}(\mathbf{x})(\IC).
  \end{equation*}
  Recall (\ref{eq:MonxMonY}). The second coordinate equals $\psi(M)= hMh^{-1}*v_0-v_0+ W$, so 
  \begin{equation*}
    v+hM*v_0-h*v_0-hMh^{-1}*v + h*W= hMh^{-1}*v_0-v_0 + W.
  \end{equation*}
  We draw two conclusions.
  
  First, we have $h*W=W$. As the projection
  $\mathrm{MT}(\mathbf{x})\rightarrow\mathrm{MT}(\mathfrak{A}_{g,s})$ is
  surjective, it follows that $\mathrm{MT}(\mathfrak{A}_{g,s})$ acts on
  $W$.
  The reductive group $\mathrm{MT}(\mathfrak{A}_{g,s})$  also acts on a
  linear subspace $W^{\perp} \subset \IG_{\mathrm{a},\IQ}^{2g}$ with
  $\IG_{\mathrm{a},\IQ}^{2g}= W\oplus W^{\perp}$.
  
  Second, 
  and putting $h=1$,  we get 
  \begin{equation}
    \label{eq:monactionW}
    M*v-v \in W(\IC)
    \quad\text{for all}\quad M\in \mathrm{Mon}(Z)(\IC) \text{ and all
    }
    (1,v) \in \mathrm{MT}(\mathbf{x})(\IC). 
  \end{equation}
  
  Now let us compute the kernel of
  $\mathrm{MT}(\mathbf{x})\rightarrow
  \mathrm{MT}(\mathfrak{A}_{g,s})$ using  \cite[Proposition 1]{Andre92}. In its notation we set
  $H=G=\mathrm{MT}(\mathbf{x})$ and claim $E' = \mathfrak{A}_{g,s}$.
  Indeed, $P$ is not contained in a proper algebraic subgroup of
  $\mathfrak{A}_{g,s}$ by hypothesis. So the cyclic subgroup it
  generates is Zariski dense in $\mathfrak{A}_{g,s}$. The said
  proposition then implies that the kernel equals $\IG_{\mathrm{a},\IQ}^{2g}$.
  So (\ref{eq:monactionW}) holds for all $v\in\IC^{2g}$. 

  In particular,

  $M*v-v\in W(\IC)$ for all $M\in \mathrm{Mon}(Z)(\IC)$
  and all $v\in W^\perp(\IC)$. As $\mathrm{Mon}(Z)$ is an algebraic
  subgroup of $\mathrm{MT}(\mathfrak{A}_{g,s})$ it also acts on
  $W^\perp$. As $W(\IC)\cap W^{\perp}(\IC)=0$ we conclude that
  $\mathrm{Mon}(Z)$ acts trivially on $W^{\perp}$. So $W^{\perp}(\IQ)$
  is contained in the fixed part of the monodromy action on
  $H_1(\mathfrak{A}_{g,s},\IQ)$.

  Moreover, we write $v_0 = v'_0+v''_0$ such that $v'_0\in W(\IR)$
  and $v''_0\in W^\perp(\IR)$. Then
  $M*v_0 = M*v'_0+M*v''_0 \in v''_0+ W=v_0 + W$ and
  \begin{equation*}
    \psi(M) = M*v'_0+M*v''_0 - v'_0 -v''_0 + W = M*v'_0 - v'_0 + W = W
  \end{equation*}
  for all $M\in \mathrm{Mon}(Z)(\IC)$.

  We summarize these last arguments and (\ref{eq:MonxMonY}) by stating that the connected algebraic monodromy group satisfies
  \begin{equation*}
     \mathrm{Mon}(Y^{\mathrm{reg}})=
    \mathrm{Mon}(\mathbf{x})
    =W\rtimes \mathrm{Mon}(Z).
  \end{equation*}
  By (\ref{eq:algebraicorbit}) we have
  \scriptsize
  \begin{alignat*}1
    (\mathrm{Mon}(Z)(\IR)^+\cdot\tau_0)\times (v_0+W(\IR))=\{(M\tau_0,v_0+w) : M\in \mathrm{Mon}(Z)(\IR)^+, w\in W(\IR)\}
    \subset \tilde Y_0 
  \end{alignat*}
  \normalsize
  
  By Moonen's work on weakly special subvarieties the orbit
  $\mathrm{Mon}(Z)(\IR)^+\cdot \tau_0$ maps onto $\pi(Y)$ under the
  uniformizing map $\mathfrak{H}_g
  \rightarrow\mathbb{A}_g^{\mathrm{an}}$, see \cite[$\mathsection$3 and
  Proposition~3.7]{Moonen}. Generically, the fiber of
  $Y\rightarrow\pi(Y)$ has dimension $\dim Y-\dim\pi(Y)$, which is $\le
  \frac 12 \dim_{\IR} W(\IR)$ by (\ref{eq:dimWlb}). Hence
  $\dim_{\IR}\tilde{Y}_0 \le \dim_\IR(
\mathrm{Mon}(Z)(\IR)^+\cdot\tau_0)\times (v_0+W(\IR))$.

  We now show
  $(\mathrm{Mon}(Z)(\IR)^+\cdot\tau_0)\times (v_0+W(\IR))=\tilde Y_0$.
  Indeed, note that the left-hand side is closed in
  $\tilde Y_0$. 
  Let $T$ denote the singular points of the complex analytic
  space
  $\tilde Y_0$.
  As $\tilde Y_0$ is irreducible, 
 $\tilde Y_0 \ssm T$ is a connected complex manifold.
 Moreover, $(\mathrm{Mon}(Z)(\IR)^+\cdot\tau_0)\times (v_0+W(\IR))\ssm
  T$ is a topological (real) manifold of dimension ${2\dim \tilde Y_0}$ contained
  in $\tilde Y_0\ssm T$. So it is open
  in $\tilde Y_0\ssm T$ by invariance of dimension. But it is also closed in $\tilde Y_0\ssm T$.
  So
  $(\mathrm{Mon}(Z)(\IR)^+\cdot\tau_0)\times (v_0+W(\IR))\ssm T=\tilde
  Y_0\ssm T$. The claim follows as $\tilde Y_0\ssm T$ is dense in
  $\tilde Y_0$.

  In particular, $(\mathrm{Mon}(Z)(\IR)^+\cdot\tau_0)\times
  (v_0+W(\IR))$ is complex analytic. Thus for all $\tau \in
  \mathfrak{H}_g$ with $\mathrm{unif}(\tau) \in \pi(Y)(\IC)$, $W(\IR)$
  is a complex subspace for the complex structure on $\IR^{2g}$ endowed
  by $\tau$. Moreover, (\ref{eq:dimWlb}) is an equality.
  
  This concludes (i) since $W$ is an algebraic subgroup of
  $\IG_{\mathrm{a},\IQ}^{2g}$. Part (ii) follows from
  \cite[Corollaire~4.1.2]{Deligne:Hodge2} because
  $W^{\perp}(\IQ)$ is contained in the fixed part of the monodromy
  action on $H_1(\mathfrak{A}_{g,s},\IQ)$.
\end{proof}

We end this section with a sufficient criterion for the meagerness of the bi-algebraic
closure of a variety.


\begin{lemma}
  \label{lem:YcapYbizarreg}
  Let $Z$ be an irreducible closed subvariety of $\IA_g$, 
  then $Z\cap Z^{\mathrm{biZar,reg}}\not=\emptyset$.
  Let $Y$ be an irreducible closed subvariety of $\mathfrak{A}_g$.
  Then $\pi(Y)\cap
  \pi(Y^{\mathrm{biZar}})^{\mathrm{reg}}\not=\emptyset$.
  If $Y^{\mathrm{exc}}$ is meager in $Y$, then
  $Y^{\mathrm{biZar,exc}}$ is meager in
  $Y^{\mathrm{biZar}}$.
\end{lemma}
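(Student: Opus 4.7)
The plan is to prove the three assertions in sequence; the third will follow directly from the second via Lemma~\ref{lem:meagersuperset}.

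First I would treat the assertion about $Z \subseteq \IA_g$. By Ullmo--Yafaev \cite[Theorem~1.2]{UYweaklyspecial}, $Z^{\mathrm{biZar}}$ is a weakly special subvariety of $\IA_g$. Since we are working with level-$\ell$-structure for $\ell \ge 3$, the relevant arithmetic subgroup of $\mathrm{Sp}_{2g}(\IZ)$ acts freely on $\mathfrak{H}_g$ and the uniformization $\mathfrak{H}_g \to \IA_g^{\mathrm{an}}$ is a covering map onto a smooth manifold; each weakly special subvariety of $\IA_g$ is smooth in this setting. Hence $Z^{\mathrm{biZar,reg}} = Z^{\mathrm{biZar}}$ and $Z \cap Z^{\mathrm{biZar,reg}} = Z \neq \emptyset$.

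Next I would handle the assertion about $Y \subseteq \mathfrak{A}_g$. The key step is to show that $\pi(Y^{\mathrm{biZar}})$ is bi-algebraic in $\IA_g$. Given an algebraic witness $\tilde Y \subseteq \IG_{\mathrm{a},\IC}^{g} \times \mathrm{Mat}_{g \times g, \IC}$ for the bi-algebraicity of $Y^{\mathrm{biZar}}$, I would take the Zariski closure $\tilde Z$ of its image under the algebraic projection to $\mathrm{Mat}_{g \times g, \IC}$; the compatibility of the two uniformizations and properness of $\pi$ then yield that some analytic irreducible component of $\mathrm{unif}^{-1}(\pi(Y^{\mathrm{biZar}})^{\mathrm{an}})$ coincides with an analytic irreducible component of $\tilde Z \cap \mathfrak{H}_g$. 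Applying the first step to $\pi(Y^{\mathrm{biZar}})$ then gives $\pi(Y^{\mathrm{biZar}})^{\mathrm{reg}} = \pi(Y^{\mathrm{biZar}})$, and so $\pi(Y) \cap \pi(Y^{\mathrm{biZar}})^{\mathrm{reg}} = \pi(Y) \neq \emptyset$.

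The third assertion is then a direct application of Lemma~\ref{lem:meagersuperset} to $Y \subseteq Y^{\mathrm{biZar}}$: the required geometric hypothesis $\pi(Y) \cap \pi(Y^{\mathrm{biZar}})^{\mathrm{reg}} \neq \emptyset$ is exactly the second assertion, and meagerness of $Y^{\mathrm{exc}}$ in $Y$ is assumed by hypothesis.

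The main obstacle I anticipate is the smoothness of weakly special subvarieties of $\IA_g$ used in the first step, together with the component-level bookkeeping needed to certify that $\pi(Y^{\mathrm{biZar}})$ is bi-algebraic. A self-contained alternative, avoiding any appeal to the Shimura-theoretic characterization, is to argue directly that every irreducible component of the singular locus of a bi-algebraic subvariety $T$ is itself bi-algebraic---by passing to the singular locus of the defining algebraic set and invoking the compatibility of singular loci with the local biholomorphism given by $\mathrm{unif}$---so that minimality of the bi-algebraic closure excludes the pathological case where $Z$ (or $\pi(Y)$) lies in the singular locus.
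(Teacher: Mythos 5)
Your primary approach leans on the claim that, once the level structure is $\ell\ge 3$ so that $\Gamma(\ell)$ is torsion-free, every weakly special subvariety of $\IA_g$ is smooth. That claim is not justified and is in fact suspect: the uniformization $\mathfrak{H}_g\to\IA_g^{\mathrm{an}}$ being an unramified cover does \emph{not} force the image of a single totally geodesic component $\tilde T$ to be smooth, because two distinct $\Gamma(\ell)$-translates $\gamma_1\tilde T\not=\gamma_2\tilde T$ may meet, producing a self-intersection of the image (Hecke curves in $\IA_1\times\IA_1$ and Hilbert modular surfaces in $\IA_2$ are the classical sources of such singularities). The paper sidesteps this entirely: it never asserts $Z^{\mathrm{biZar}}$ is smooth. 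It shows instead that the singular locus of a bi-algebraic subvariety has bi-algebraic irreducible components (both ``being singular'' on the quotient side and on the uniformized side are algebraic conditions compatible under $\mathrm{unif}$), and then invokes minimality of $Z^{\mathrm{biZar}}$ among bi-algebraic subvarieties containing $Z$ to conclude $Z\not\subset Z^{\mathrm{biZar}}\ssm Z^{\mathrm{biZar,reg}}$. Your final paragraph sketches exactly this argument and correctly identifies it as the safe fallback, so you already have the right fix in hand.

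There is a second, smaller gap in your step 2 that persists even if you adopt the fallback for step 1. You argue that $\pi(Y^{\mathrm{biZar}})$ is bi-algebraic and then ``apply step 1.'' But step 1 (in the minimality formulation) needs $\pi(Y^{\mathrm{biZar}})$ to be the \emph{minimal} bi-algebraic subvariety containing $\pi(Y)$, not merely \emph{some} bi-algebraic subvariety containing it. This is the identity $\pi(Y^{\mathrm{biZar}})=\pi(Y)^{\mathrm{biZar}}$, which the paper states explicitly as the content of step 2; the reverse containment $\pi(Y^{\mathrm{biZar}})\subset\pi(Y)^{\mathrm{biZar}}$ comes from the fact that $\pi^{-1}\bigl(\pi(Y)^{\mathrm{biZar}}\bigr)$ is bi-algebraic and contains $Y$, hence contains $Y^{\mathrm{biZar}}$. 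You should make this step explicit. Your reduction of the third assertion to Lemma~\ref{lem:meagersuperset} is exactly as in the paper and is fine.
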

\begin{proof}
  First we show that $Z$ is not contained in the singular locus of
  $Z^{\mathrm{biZar}}$.
  Indeed, being a singular point of $Z^{\mathrm{biZar}}$ is an
  algebraic condition in $\IA_g$. A component of the preimage
  of $Z^{\mathrm{biZar}}(\IC)$ under
  $\mathfrak{H}_g\rightarrow \IA_g^{\mathrm{an}}$ is algebraic. So being a
  singular point is also an algebraic condition in $\mathfrak{H}_g$.
  Therefore, each irreducible component of  $Z\ssm
  Z^{\mathrm{biZar,reg}}$ is bi-algebraic.
  As $Z^{\mathrm{biZar}}$ is the minimal bi-algebraic subvariety
  containing $Z$ we have $Z\cap Z^{\mathrm{biZar,reg}}\not=\emptyset$.
  The first part of the follows.

  The second claim follows from the first one and since $\pi(Y^{\mathrm{biZar}})=
  \pi(Y)^{\mathrm{biZar}}$.

  The third claim follows from the second one and from 
  Lemma~\ref{lem:meagersuperset} with $X = Y^{\mathrm{biZar}}$.
\end{proof}

\section{A Criterion for Non-degeneracy}
\label{sec:critnondeg}
Recall that $\mathfrak{A}_{g}$ is a geometrically irreducible
quasi-projective variety defined over a number field. Again we take
this number field to be a subfield of $\IC$. For the rest of this
section we
consider all subvarieties as defined over $\IC$. 

Let $X\subset\mathfrak{A}_{g}$ be an irreducible closed subvariety. 
We set
\begin{equation*}
  \delta(X) = \dim X^{\mathrm{biZar}} - \dim \pi(X^{\mathrm{biZar}})
  \ge 0,
\end{equation*}
and with $t\in\IZ$, also
\begin{equation}
  \label{def:degtlocus}
  X^{\mathrm{deg}}(t)=\bigcup_{\substack{Y\subset X \\ \delta(Y) < \dim Y
  + t \\ \dim Y > 0}} Y
\end{equation}
where $Y$ ranges over positive dimensional \textit{irreducible} closed
subvarieties of $X$. Thus
\begin{equation*}
   X^{\mathrm{deg}}(t)\subset  X^{\mathrm{deg}}(t+1).
\end{equation*}

By \cite[Theorem~7.1]{GaoBettiRank}, $X^{\mathrm{deg}}(t)$ is Zariski
closed in $X$. Moreover if $X$ is defined over some algebraically closed field $L \subseteq \IC$ of characteristic $0$, then 
$X^{\mathrm{deg}}(t)$ is also defined over $L$; see \cite[Proposition~4.2.6]{GaoHDR}.

\begin{remark}
  Before moving on, let us take a look at $X^{\mathrm{deg}}(t)$ when
  $\pi(X)$ is a point. In this case, $X$ is contained in a fiber of $\pi
  \colon \mathfrak{A}_g \rightarrow \mathbb{A}_g$, which is an abelian
  variety. Call this abelian variety $A$. For each irreducible
  subvariety $Y$ of $X$, we have $\delta(Y) = \dim Y^{\mathrm{biZar}}
  \ge \dim Y$. In particular, $X^{\mathrm{deg}}(t) = \emptyset$ if $t
  \le 0$. 
  
  By \cite[Proposition~5.1]{UYweaklyspecial}, any bi-algebraic
  subvariety
  of $A$
  is a
  coset in $A$, \textit{i.e.}, a translate of an abelian subvariety of
  $A$. Conversely, any coset in $A$ is bi-algebraic.
  Thus $Y^{\mathrm{biZar}}$ is the smallest coset of $A$ containing $Y$. 
  Now if $\delta(Y) < \dim Y+1$, then $Y^{\mathrm{biZar}} =
  Y$.
  Thus $X^{\mathrm{deg}}(1)$ is the union of all positive-dimensional
  cosets in $A$ that are contained in $X$. This is precisely the Ueno
  locus or Kawamata locus.

  For general $t$ and $X$ still in $A$, the union
  $X^{\mathrm{deg}}(t)$ was studied by R\'emond~\cite[$\mathsection
  3$]{RemondInterIII} and by Bombieri, Masser, and Zannier in the
  multiplicative case~\cite{BMZGeometric,BMZUnlikely} under the name
  ($b$-)anomalous.
\end{remark}


We investigate necessary conditions for when $X=X^{\mathrm{deg}}(t)$.
As a general result we mention \cite[Theorem~8.1]{GaoBettiRank} and
the exposition here is heavily motivated by this reference. Our
approach works under the assumption that $X^{\mathrm{deg}}(t)$ contains a
non-empty open subset of $X^{\mathrm{an}}$.

We keep the same setup as introduced in the beginning of $\mathsection$\ref{sec:MT}.

\begin{lemma}
  \label{lem:predegenerate}
  Let $Y\subset \mathfrak{A}_{g}$ be an irreducible closed subvariety
  such that $Y^{\mathrm{exc}}$ is meager in $Y$.
  Let
   $S$ denote the regular locus of  $\pi(Y^{\mathrm{biZar}})$. Then
   $\pi(Y)\cap S\not=\emptyset$ and there exists 
   $\varphi\in
  \mathrm{End}(\mathfrak{A}_{g,S}/S)$ with the following properties
  hold:
  \begin{enumerate}
  \item[(i)] We have
    $\dim \ker\varphi_s= \delta(Y)$ for all $s\in S(\IC)$.
  \item[(ii)] The fiber  $Y^{\mathrm{biZar}}_s$ is
    a finite union of translates of $(\ker\varphi_s)^0$ for all $s\in
    S(\IC)$.
  \item[(iii)] The abelian varieties $\varphi(\mathfrak{A}_{g})_s$ are
    pairwise isomorphic for all $s\in S(\IC)$.
  \item[(iv)] If $\delta(Y)=0$, then $Y$ is a point. 
  \end{enumerate}
\end{lemma}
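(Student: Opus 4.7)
The plan is to apply the structural results already established to the bi-algebraic closure $Y^{\mathrm{biZar}}$ and then produce $\varphi$ via Lemma~\ref{lem:abschquot}. First I would invoke Lemma~\ref{lem:YcapYbizarreg}: the hypothesis that $Y^{\mathrm{exc}}$ is meager yields both $\pi(Y)\cap S\neq\emptyset$ and that $Y^{\mathrm{biZar,exc}}$ is meager in $Y^{\mathrm{biZar}}$. The latter places $Y^{\mathrm{biZar}}$ in the hypotheses of Proposition~\ref{prop:monodromyaction}, which furnishes a rational subspace $W\subset\IR^{2g}$ of real dimension $2\delta(Y)$ and, for each $s=\mathrm{unif}(\tau)\in\pi(Y^{\mathrm{biZar}})(\IC)$, an abelian subvariety $C_s=\mathrm{unif}(\{\tau\}\times W)\subset\mathfrak{A}_{g,s}$ of dimension $\delta(Y)$ such that every fiber $Y^{\mathrm{biZar}}_s$ is a finite union of translates of $C_s$, with the quotients $\mathfrak{A}_{g,s}/C_s$ pairwise isomorphic as abelian varieties.

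Next I would construct $\varphi$ by applying Lemma~\ref{lem:abschquot} to $Y^{\mathrm{biZar}}$ regarded as a subvariety of the abelian scheme $\mathfrak{A}_{g,S}\rightarrow S$; the coset description of the fibers obtained above verifies the hypothesis. This delivers $\varphi\in\mathrm{End}(\mathfrak{A}_{g,S}/S)$ with $\dim\ker\varphi_s=\dim Y^{\mathrm{biZar}}-\dim\pi(Y^{\mathrm{biZar}})=\delta(Y)$ for every $s\in S(\IC)$, which is (i). For (ii), the proof of Lemma~\ref{lem:abschquot} shows that each irreducible component of $Y^{\mathrm{biZar}}_s$ lies inside a fiber of $\varphi|_{Y^{\mathrm{biZar}}}$, hence inside a translate of $\ker\varphi_s$. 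Since such a component is a coset of $C_s$ of dimension $\delta(Y)=\dim(\ker\varphi_s)^0$, it must already be a translate of $(\ker\varphi_s)^0$; comparing the translates through the origin then forces $C_s=(\ker\varphi_s)^0$, so $Y^{\mathrm{biZar}}_s$ is a finite union of translates of $(\ker\varphi_s)^0$.

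The step I expect to be the most delicate is (iii), where the pairwise isomorphism of $\mathfrak{A}_{g,s}/C_s$ supplied by Proposition~\ref{prop:monodromyaction}(ii) must be transferred to $\varphi(\mathfrak{A}_g)_s=\mathfrak{A}_{g,s}/\ker\varphi_s$; the two differ by a quotient by the finite \'etale component-group scheme $\ker\varphi/(\ker\varphi)^0$ over $S$. My plan is to pass to a finite \'etale cover of $S$ on which both the isotrivial abelian scheme $\mathfrak{A}_{g,S}/C$ and this component-group scheme become constant; then $\varphi(\mathfrak{A}_g)$ pulls back to a constant abelian scheme, whence all fibers $\varphi(\mathfrak{A}_g)_s$ are pairwise isomorphic, giving (iii). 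Finally, (iv) will drop out of (iii): if $\delta(Y)=0$, then $\varphi_s$ is an isogeny, so $\varphi(\mathfrak{A}_g)_s=\mathfrak{A}_{g,s}$, and (iii) forces all $\mathfrak{A}_{g,s}$ for $s\in S$ to be pairwise isomorphic as abelian varieties. Since only finitely many points of $\IA_g$ parametrize principally polarized abelian varieties with level-$\ell$-structure whose underlying abelian variety lies in a fixed isomorphism class, the irreducible $S$ must reduce to a single point; hence $\pi(Y^{\mathrm{biZar}})$ is a point, $\dim Y^{\mathrm{biZar}}=\delta(Y)=0$, and consequently $Y$ is a point.
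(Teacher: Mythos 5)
Your overall strategy is the same as the paper's: invoke Lemma~\ref{lem:YcapYbizarreg} to get $\pi(Y)\cap S\neq\emptyset$ and meagerness of $Y^{\mathrm{biZar,exc}}$, feed $Y^{\mathrm{biZar}}$ into Proposition~\ref{prop:monodromyaction} to get the coset structure in fibers, then produce $\varphi$ via Lemma~\ref{lem:abschquot}. Parts (i), (iii), (iv) are sound; for (iv) the paper uses the stronger fact from Lemma~\ref{lem:abschquot} that $\varphi$ is the \emph{identity} when $\delta(Y)=0$, while you use only that $\varphi_s$ is an isogeny, which also works. Your moduli-theoretic explanation of why pairwise isomorphic fibers force $S$ to be a point, and your \'etale-cover argument for (iii), are more detailed than the paper's terse ``consider the morphism to a suitable moduli space,'' but they are in the same spirit.

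There is, however, a genuine gap in your argument for (ii). You assert that ``the proof of Lemma~\ref{lem:abschquot} shows that each irreducible component of $Y^{\mathrm{biZar}}_s$ lies inside a fiber of $\varphi|_{Y^{\mathrm{biZar}}}$.'' This is not what that proof establishes: it only shows, for $s$ in the Zariski-dense (but proper) set $\Sigma_\varphi$, that the \emph{chosen} component $Z_s$ lies in a fiber of $\varphi|_Y$. Conclusion (ii) must hold for \emph{every} $s\in S(\IC)$, and passing from a dense set of $s$ to all of $S$ requires an extra argument. The paper does this by first deducing from Lemma~\ref{lem:abschquot}(i) that $\pi\circ\varphi|_{Y^{\mathrm{biZar}}\cap\cA}$ is generically finite, establishing (ii) over a Zariski-open dense subset, and then using irreducibility of $(\ker\varphi)^0\times_S(Y^{\mathrm{biZar}}\cap\cA)$ to show that the stabilizer condition $(\ker\varphi_s)^0+Y^{\mathrm{biZar}}_s=Y^{\mathrm{biZar}}_s$ propagates to all of $S$. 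Your closing remark that ``comparing the translates through the origin forces $C_s=(\ker\varphi_s)^0$'' actually points toward an alternative fix: $C_s=\mathrm{unif}(\{\tau\}\times W)$ and $(\ker\varphi_s)^0$ both correspond to fixed rational subspaces of $\IR^{2g}$ independent of $s$, so it suffices to verify $C_s=(\ker\varphi_s)^0$ at a \emph{single} $s$; but as written you invoke the containment for all $s$ before establishing it, so the logic is circular. Either carry out the paper's irreducibility extension or make the ``single $s$ suffices because $W$ is fixed'' argument explicit.
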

\begin{proof}
  Recall that $\pi(Y)^{\mathrm{biZar}}$ is
  the smallest bi-algebraic subvariety of $\IA_{g}$ that contains
  $\pi(Y)$ and
  that  it equals $\pi(Y^{\mathrm{biZar}})$.

  By Lemma~\ref{lem:YcapYbizarreg}, $Y^{\mathrm{biZar,exc}}$ is meager
  in $Y^{\mathrm{biZar}}$ and $\pi(Y)\cap S\not=\emptyset$.


  By Proposition~\ref{prop:monodromyaction} applied to
  $Y^{\mathrm{biZar}}$ 
  each fiber of $Y^{\mathrm{biZar}}$ above a complex point of
  $\pi(Y^{\mathrm{biZar}})$ is a finite union of cosets of dimension
  $\delta(Y)$.
  
  We abbreviate $\cA = \pi^{-1}(S)$. 
  We apply Lemma~\ref{lem:abschquot} to the abelian scheme $\cA/S$ and
  the subvariety $Y^{\mathrm{biZar}}\cap \cA$. Let $\varphi$ be the
  endomorphism in the said lemma.

  By the conclusion of Lemma~\ref{lem:abschquot}(ii) we have
  $\dim\ker\varphi_{s} = \dim Y^{\mathrm{biZar}}\cap \cA - \dim
  \pi(Y^{\mathrm{biZar}}\cap \cA)=\delta(Y)$ for all $s\in S(\IC)$.
  Part  (i) now  follows. For later reference we remark that $\varphi$
  is the identity map if 
  $\delta(Y)=0$; see Lemma~\ref{lem:abschquot}.

  By Lemma~\ref{lem:abschquot}(i) we have $\dim
  \varphi(Y^{\mathrm{biZar}}\cap \cA) = \dim \pi(Y^{\mathrm{biZar}}\cap
  \cA)$. 
  By the fiber dimension theorem, the general fiber of
  $\pi|_{\varphi(Y^{\mathrm{biZar}}\cap\cA)} \colon
  \varphi(Y^{\mathrm{biZar}}\cap\cA) \rightarrow
  \pi(Y^{\mathrm{biZar}}\cap\cA)=S$ is finite. For $s$ in a Zariski open
  and non-empty subset of $S$ we have that $\varphi(Y^{\mathrm{biZar}}\cap\cA)_s$ is
  finite. Therefore, $Y^{\mathrm{biZar}}_s$ is contained in a finite
  union of $(\ker\varphi_s)^0$ for such $s$. By dimension reasons, these
  $Y^{\mathrm{biZar}}_s$ are a finite union of $(\ker\varphi_s)^0$ and
  $(\ker\varphi_s)^0+Y^{\mathrm{biZar}}_s=Y^{\mathrm{biZar}}_s$.

  Note that $(\ker\varphi)^0$ is smooth over $S$ with geometrically
  irreducible generic fiber, as it is an abelian
  scheme. Moreover, $Y^{\mathrm{biZar}}\cap \cA$ is
  Zariski open in $Y^{\mathrm{biZar}}$ and thus irreducible. It follows
  from a purely topological consideration that $(\ker\varphi)^0\times_S (Y^{\mathrm{biZar}}\cap\cA)$ is
  irreducible. A Zariski open and non-empty subset is mapped to
  $Y^{\mathrm{biZar}}\cap\cA$ under addition. This continues to hold on
  all of $(\ker\varphi)^0\times_S (Y^{\mathrm{biZar}}\cap\cA)$. Thus
  $(\ker\varphi_s)^0+Y^{\mathrm{biZar}}_s=Y^{\mathrm{biZar}}_s$. By
  dimension reasons
  $Y^{\mathrm{biZar}}_s$ is a finite union of $(\ker\varphi_s)^0$ for
  all $s\in S(\IC)$. Part (ii) follows.
  
  For all $s\in S(\IC)$, the image $\varphi(\mathfrak{A}_{g,s}) =
  \varphi(\mathfrak{A}_g)_s$ is isogenous to
  $\mathfrak{A}_{g,s}/(\ker\varphi_s)^0$. The latter are pairwise
  isomorphic abelian varieties for all $s$ by
  Proposition~\ref{prop:monodromyaction}. By consider the morphism to a
  suitable moduli space we conclude that the $\varphi(\mathfrak{A}_g)_s$
  are indeed pairwise isomorphic. We conclude (iii).

  For the proof of part (iv) we assume that $\delta(Y)=0$. As remarked
  above, $\varphi$ is the identity. Therefore,
  $\mathfrak{A}_{g,s}$ are pairwise isomorphic abelian varieties for
  $s\in S(\IC)$. This implies that $S$ is a point and so is $\pi(Y)$.
  But then $\pi(Y^{\mathrm{biZar}})$ is a
  point. As $0=\delta(Y) = \dim Y^{\mathrm{biZar}} - \dim
  \pi(Y^{\mathrm{biZar}})$ we have that
  $Y^{\mathrm{biZar}}$ is a point. The same holds for $Y$ and this
  completes the proof of (iv). 
\end{proof}

Now we are ready to prove a necessary condition for
$X^{\mathrm{deg}}(t)$ being sufficiently large. The next proposition
relies on the previous lemma and the Baire Category Theorem; recall
that group of endomorphisms of an abelian scheme is at most countably
infinite. 

\begin{proposition}
  \label{prop:deg}
  Let $t\in \IZ$
  and let $X$ be an irreducible closed subvariety of $\mathfrak{A}_g$.
  Let $\eta$ denote the generic point of $\pi(X)$ and let $S$ denote
  the regular locus of $\pi(X)$.
  We suppose
  \begin{enumerate}
  \item [(a)]
    $X^{\mathrm{deg}}(t)$ contains an open and non-empty subset of
    $X^{\mathrm{an}}$
  \item[(b)] and
    $X_\eta$ is not contained in a proper algebraic subgroup of
    $\mathfrak{A}_{g,\eta}$,
  \end{enumerate}
  There exists a set $\cY$ of irreducible closed
  positive dimensional subvarieties 
  of $X$ and $\varphi \in \mathrm{End}(\mathfrak{A}_{g,S}/S)$
  with the following properties for all $Y\in\cY$.
  \begin{enumerate}
  \item[(i)] We have
    $\dim \ker\varphi_s= \delta(Y)$ for all $s\in S(\IC)$.
  \item[(ii)] The fiber  $Y^{\mathrm{biZar}}_s$ is
    a finite union of translates of $(\ker\varphi_s)^0$ for all
    complex points $s$ of a Zariski open and dense subset $\pi(Y)$. 
  \item[(iii)] The abelian varieties $\varphi(\mathfrak{A}_{g})_s$ are
    pairwise isomorphic for all complex points $s$ of a Zariski open and
    dense subset of $\pi(Y)$.
  \item[(iv)]
    We have $\delta(Y) < \dim Y + t$ and $\pi(Y)\cap S\not=\emptyset$.
  \item[(v)]
    The set $Y^{\mathrm{exc}}$ is
    meager in $Y$.
  \end{enumerate}
  Finally, the closure of $\bigcup_{Y\in \cY} Y(\IC)$ in
  $X^{\mathrm{an}}$ has non-empty interior.
\end{proposition}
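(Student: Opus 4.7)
The plan is to derive the proposition from Lemma~\ref{lem:predegenerate} via a Baire category argument exploiting the countability of $\mathrm{End}(\mathfrak{A}_{g,S}/S)$.

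First I would dispose of the two hypotheses. The Zariski closedness of $X^{\mathrm{deg}}(t)$ in $X$, cited from \cite[Theorem~7.1]{GaoBettiRank}, combined with hypothesis (a) and the irreducibility of $X$, forces $X^{\mathrm{deg}}(t)=X$ as sets: a proper Zariski closed subset of $X$ cannot contain a non-empty Euclidean open subset of $X^{\mathrm{an}}$. Next, hypothesis (b) excludes alternative (ii) of Lemma~\ref{lem:alternative} applied to $Y=X$, so $X^{\mathrm{exc}}$ is meager in $X$.

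The main step is a Baire category argument. For each $\varphi\in\mathrm{End}(\mathfrak{A}_{g,S}/S)$ let $\mathcal{Y}_\varphi$ denote the collection of irreducible closed positive-dimensional subvarieties $Y\subset X$ with $\delta(Y)<\dim Y+t$, $\pi(Y)\cap S\neq\emptyset$, $Y^{\mathrm{exc}}$ meager in $Y$, and such that the endomorphism $\varphi_Y$ produced by Lemma~\ref{lem:predegenerate} applied to $Y$ agrees with $\varphi$ after restriction to the non-empty open set $\pi(Y)\cap S\cap\pi(Y^{\mathrm{biZar}})^{\mathrm{reg}}$; the comparison is done via the injective specialization map (\ref{eq:specializeend}) into the generic-fibre endomorphism ring. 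Set $E_\varphi:=\bigcup_{Y\in\mathcal{Y}_\varphi}Y(\IC)$. The key inclusion is $X(\IC)\setminus X^{\mathrm{exc}}\setminus \pi|_X^{-1}(\pi(X)\setminus S)\subset\bigcup_\varphi E_\varphi$: for $P$ on the left-hand side, $P\in X=X^{\mathrm{deg}}(t)$ and the definition (\ref{def:degtlocus}) supply a positive-dimensional irreducible closed $Y_P\subset X$ with $P\in Y_P$ and $\delta(Y_P)<\dim Y_P+t$; since $P\notin X^{\mathrm{exc}}$ and $\pi(P)\in S$, alternative (ii) of Lemma~\ref{lem:alternative} applied to $Y_P$ is excluded, so $Y_P^{\mathrm{exc}}$ is meager in $Y_P$, Lemma~\ref{lem:predegenerate} applies to $Y_P$ producing $\varphi_{Y_P}$, and $P$ lies in the corresponding $E_\varphi$.

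Because $X^{\mathrm{exc}}$ is meager in $X$ and $\pi|_X^{-1}(\pi(X)\setminus S)$ is Zariski closed and proper in $X$, the left-hand side above is comeager in $X^{\mathrm{an}}$. As $X^{\mathrm{an}}$ is a Baire space and $\mathrm{End}(\mathfrak{A}_{g,S}/S)$ is at most countable, the Baire Category Theorem yields a $\varphi^{*}$ for which the Euclidean closure of $E_{\varphi^{*}}$ has non-empty interior in $X^{\mathrm{an}}$. Setting $\cY:=\mathcal{Y}_{\varphi^{*}}$ and $\varphi:=\varphi^{*}$ gives the required family, since properties (i)--(v) for each $Y\in\cY$ are read off directly from the conclusion of Lemma~\ref{lem:predegenerate}, with the Zariski open dense subset in (ii) and (iii) taken to be $\pi(Y)\cap S\cap\pi(Y^{\mathrm{biZar}})^{\mathrm{reg}}$. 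The main technical obstacle is the endomorphism identification built into the definition of $\mathcal{Y}_\varphi$: Lemma~\ref{lem:predegenerate} delivers its endomorphism over the base $\pi(Y^{\mathrm{biZar}})^{\mathrm{reg}}$, whereas the proposition insists on a single $\varphi\in\mathrm{End}(\mathfrak{A}_{g,S}/S)$, and one must verify that the relevant data $(Y,\varphi_Y)$ can be genuinely indexed by elements of the latter countable ring (through their common specializations to $\mathrm{End}(\mathfrak{A}_{g,s})$) so that the Baire argument has force. Once this identification is in place, the remainder is routine.
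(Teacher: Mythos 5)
Your overall strategy matches the paper's: reduce to Lemma~\ref{lem:predegenerate} and run a Baire category argument over the countable ring $\mathrm{End}(\mathfrak{A}_{g,S}/S)$. However, there is a genuine gap in your key inclusion. You take $P\notin X^{\mathrm{exc}}$ with $\pi(P)\in S=\pi(X)^{\mathrm{reg}}$, pick some positive-dimensional $Y_P\ni P$ with $\delta(Y_P)<\dim Y_P+t$, and argue that alternative (ii) of Lemma~\ref{lem:alternative} applied to $Y_P$ is excluded because $P$ is not in a proper algebraic subgroup. But alternative (ii) of that lemma only asserts that points $P'$ with $\pi(P')\in\pi(Y_P)^{\mathrm{reg}}$ lie in proper algebraic subgroups; it says nothing about $P$ unless $\pi(P)\in\pi(Y_P)^{\mathrm{reg}}$, and this does not follow from $\pi(P)\in\pi(X)^{\mathrm{reg}}$ (the singular locus of the smaller $\pi(Y_P)$ is unrelated to that of $\pi(X)$, think of a cuspidal curve inside a smooth surface). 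So you cannot conclude that $Y_P^{\mathrm{exc}}$ is meager in $Y_P$, and the inclusion $X(\IC)\setminus X^{\mathrm{exc}}\setminus\pi|_X^{-1}(\pi(X)\setminus S)\subset\bigcup_\varphi E_\varphi$ is not established.

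The paper avoids this by never invoking Lemma~\ref{lem:alternative} on the individual $Y$'s. Instead it writes $X^{\mathrm{exc}}\subset\bigcup_i X_i(\IC)$ and $S^{\mathrm{exc}}\subset\bigcup_i S_i(\IC)$ for Zariski closed proper $X_i\subsetneq X$, $S_i\subsetneq\pi(X)$, and then defines $\cY$ to consist of those $Y$ (with $\dim Y>0$, $\delta(Y)<\dim Y+t$) satisfying $Y\not\subset X_i$ and $Y\not\subset\pi^{-1}(S_i)$ for all $i$. Then $Y^{\mathrm{exc}}\subset X^{\mathrm{exc}}\subset\bigcup_i(Y\cap X_i)(\IC)$ with each $Y\cap X_i\subsetneq Y$, so $Y^{\mathrm{exc}}$ is meager in $Y$ by construction, not by Lemma~\ref{lem:alternative}. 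The $Y$'s you discard by this criterion only cover a meager set $N$, and the hypothesis (a) supplies the non-empty open subset inside $N\cup\bigcup_{Y\in\cY}Y(\IC)$. This reorganization is the missing ingredient; once you adopt it, the rest of your argument (assigning $\varphi_Y$ via the common specialization in $\mathrm{End}(\mathfrak{A}_{g,s})$ for $s\in U_Y(\IC)\setminus S^{\mathrm{exc}}$ and applying Baire) goes through as you describe.
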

\begin{proof}
  By hypothesis (b) and Lemma~\ref{lem:alternative} applied to
  $X\subset \mathfrak{A}_{g}$, we have that $X^{\mathrm{exc}}$ is meager
  in $X$. Thus $X^{\mathrm{exc}}\subset \bigcup_{i=1}^\infty X_i(\IC)$
  such that all $X_i\subsetneq X$ are Zariski closed.
  For a similar reason and using Proposition~\ref{prop:excep} there exist
  Zariski closed $S_1,S_2,\ldots\subsetneq \pi(X)$, among them is
  $\pi(X)\ssm S$,  with
  $S^{\mathrm{exc}}\subset \bigcup_{i=1}^\infty S_i(\IC)$. 
  
  By hypothesis the union of all $Y\subset X$ with $\dim Y > 0$ and
  \begin{equation}
    \label{eq:tequalzeroprop}
    \delta(Y) = \dim Y^{\mathrm{biZar}} - \dim \pi(Y^{\mathrm{biZar}})
    < \dim Y  + t
  \end{equation}
  contains a non-empty open subset of $X^{\mathrm{an}}$. Let 
 $\cY$ be the collection of those  $Y$ with
 $Y\not\subset\pi^{-1}(S_i)$ and  $Y\not\subset X_i$  for all $i$.
  There is a set
  $N\subset X(\IC)$, meager in  $X$, such that 
  $N\cup \bigcup_{Y\in \cY} Y(\IC)$ contains a non-empty open
  subset of $X^{\mathrm{an}}$.

  Let $Y\in \cY$ be arbitrary. In particular, $\pi(Y)\cap S\not=\emptyset$.  Set $U_Y=
  \pi(Y)\cap \pi(Y^{\mathrm{biZar}})^{\mathrm{reg}}\cap S$; it is a Zariski open and dense
  subset of $\pi(Y)$ by Lemma~\ref{lem:YcapYbizarreg}.
  Therefore, $U_Y\not\subset S_i$ for all $i$ by the choice of $\cY$.
  The Baire Category Theorem implies
  $U_Y(\IC)\not\subset  \bigcup_{i=1}^\infty S_i(\IC)$, so
  $U_Y(\IC)\not\subset S^{\mathrm{exc}}$.

  By definition  we have
  $Y^{\mathrm{exc}}\subset X^{\mathrm{exc}}$
  and so $Y^{\mathrm{exc}}\subset \bigcup_{i=1}^\infty (Y\cap
  X_i)(\IC)$.
  By the choice of $\cY$ we conclude that $Y^{\mathrm{exc}}$ is meager
  in $Y$. 

  Apply
  Lemma~\ref{lem:predegenerate} to $Y$ and obtain $\varphi_Y$, 
  and restrict $\varphi_Y$ to an endomorphism of the abelian
  scheme $\pi^{-1}(U_Y)$. 
%
  Choose $s\in U_Y(\IC)\ssm S^{\mathrm{exc}}$. 
  Then ${\varphi_Y}|_{\pi^{-1}(s)} \in \mathrm{End}(\mathfrak{A}_{g,s})$   extends to an
  endomorphism of $\mathfrak{A}_{g,S}/S$. This extension is unique and it
  coincides with $\varphi_Y$ on $\pi^{-1}(U_Y)$. We use $\varphi_Y$ to
  denote this endomorphism of $\mathfrak{A}_{g,S}/S$. Note that $\delta(Y) =
  \dim\ker(\varphi_Y)_s$ for all $s\in S(\IC)$.

  Recall that  $N\cup{\bigcup_{Y\in \cY} Y^{\mathrm{an}}}$
  contains a non-empty
  open subset of $ X^{\mathrm{an}}$. We rearrange this union and
  conclude that the said open subset lies in 
  $N\cup \bigcup_{\varphi\in  \mathrm{End}(\mathfrak{A}_{g,S}/S)}
  \overline{D_\varphi}$ where $D_\varphi= 
  {\bigcup_{Y\in \cY: \varphi_Y= \varphi} Y(\IC)}$
  and $\overline{D_\varphi}$ denotes
  the topological closure in $X^{\mathrm{an}}$. 
  
  By the Baire Category Theorem there is $\varphi\in \mathrm{End}(\mathfrak{A}_{g,S}/S)$
  such that $\overline{D_\varphi}$
  has
   non-empty interior in $X^{\mathrm{an}}$. In particular, $D_\varphi$
   is Zariski dense in $X$. 

   We claim that the proposition follows with $\cY$ replaced by $\{Y\in \cY :
   \varphi_Y = \varphi\}$. Indeed, properties (i), (ii), and (iii) follow from
   the corresponding properties of Lemma~\ref{lem:predegenerate} and
   (iv) and (v) follow from the choice of $\cY$
\end{proof}

\begin{remark}
  \label{rmk:bettirank}
  The case $t=0$ is closely linked to large fibers of the Betti map;
  see \cite[$\mathsection$3]{GaoBettiRank} for a definition of the
  Betti map. The Betti map is real analytic and defined locally on
  $X^{\mathrm{reg,an}}$.
  Suppose
  that the
  generic rank of the differential is strictly less than $2\dim X$.
  This is the case if $X$ fails to be non-degenerate in the sense of \cite[Definition~1.5]{DGHUnifML}.   
  Then there is a non-empty  open subset of $X^{\mathrm{an}}$ on which
  the  rank  is pointwise strictly less than $2\dim X$.
  Using the first-named author's Ax--Schanuel Theorem~\cite{GaoMixedAS} for
  $\mathfrak{A}_g$ one can recover that
  $X^{\mathrm{deg}}(0)$ contains a non-empty open subset of
  $X^{\mathrm{an}}$.
  So the hypothesis (a) for $t=0$ in Proposition~\ref{prop:deg} is satisfied.
  See also \cite[Theorem~1.7]{GaoBettiRank} for an equivalence.
\end{remark}


\section{The Zeroth Degeneracy Locus in a Fiber Power}
\label{sec:fiberpower}

We keep the notation from $\mathsection$\ref{sec:critnondeg} and
consider all subvarieties
as defined over $\IC$. 
We study ramifications
of Proposition~\ref{prop:deg} in the case $t=0$ for the
 $m$-fold fiber power
$\mathfrak{A}_{g}^{[m]}$ of  $\pi\colon\mathfrak{A}_g\rightarrow
\IA_g$, here $m\in\IN$. There is a natural morphism
$\mathfrak{A}_{g}^{[m]} \rightarrow \mathfrak{A}_{mg}$ which is the
base change of the modular map $\IA_{g}\rightarrow\IA_{mg}$ that
attaches to an abelian variety its $m$-th power compatible  with the
principal polarization and level structure. It can be shown that
$\IA_g\rightarrow\IA_{mg}$ is a closed immersion. So
$\mathfrak{A}_{g}^{[m]} \rightarrow \mathfrak{A}_{mg}$ is a closed
immersion. We will treat $\mathfrak{A}_{g}^{[m]}$ as a closed
subvariety of $\mathfrak{A}_{mg}$.

By abuse of notation let
$\pi\colon \mathfrak{A}_{mg}\rightarrow \IA_{mg}$ denote the
structure morphism. 

Let $X$ be a Zariski closed subset of an abelian variety $A$ defined
over $\IC$. The stabilizer $\mathrm{Stab}(X)$ of $X$ is the algebraic group
determined by $\{P \in A(\IC) : P+X=X\}$.



\begin{theorem}
  \label{thm:deg0}
  Let $X$ be an irreducible closed subvariety of
  $\mathfrak{A}_g^{[m]}$. Consider $X \subset \mathfrak{A}_{mg}$ and let
  $\eta$ denote the generic point of $\pi(X)\subset\IA_{mg}$. We suppose
  \begin{enumerate}
  \item [(a)]
    $X^{\mathrm{deg}}(0)$ contains an open and non-empty subset of
    $X^{\mathrm{an}}$,
  \item[(b)]
    $X_\eta$ is not contained in a proper algebraic subgroup of
    $\mathfrak{A}_{mg,\eta}$,
  \item[(c)] and
    \begin{equation}
      \label{eq:dimhyp}
      \dim X \le 2m.
    \end{equation}
  \end{enumerate}
  Then the following hold true.
  \begin{enumerate}
  \item [(i)]  There exists a Zariski open and dense subset $U\subset \pi(X)$ such
  that 
   for all $s\in U(\IC)$ the stabilizer $ \mathrm{Stab}(X_s)$ has
   dimension at least $m$.
  \item[(ii)] There is a Zariski dense subset $D\subset \pi(X)(\IC)$ such
  that for all $s\in D$ the stabilizer $\mathrm{Stab}(X_s)$
  contains $E^m$ where $E\subset \mathfrak{A}_{g,s}$ is an elliptic
  curve. 
  \end{enumerate}
\end{theorem}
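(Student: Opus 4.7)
The plan is to apply Proposition~\ref{prop:deg} with $t=0$ to $X \subset \mathfrak{A}_{mg}$, extract a common abelian subscheme $K$ whose fibers will serve as the desired stabilizer, and then use the fiber-power structure together with hypotheses (b) and (c) to obtain both the dimensional bound and the refinement to a product of elliptic curves.

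First I would invoke Proposition~\ref{prop:deg} with $t=0$. Its hypotheses (a) and (b) coincide with ours, yielding a countable collection $\cY$ of positive-dimensional irreducible closed subvarieties $Y \subset X$ satisfying $\delta(Y) < \dim Y$ together with a common endomorphism $\varphi \in \mathrm{End}(\mathfrak{A}_{mg,S}/S)$, where $S = \pi(X)^{\mathrm{reg}}$. Setting $K := (\ker\varphi)^0$, the fibers $K_s$ all have constant dimension $d := \delta(Y)$ by Proposition~\ref{prop:deg}(i), and the closure of $\bigcup_{Y \in \cY} Y(\IC)$ in $X^{\mathrm{an}}$ has nonempty Euclidean interior. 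Applying the Baire Category Theorem to the countable family $\{Y(\IC)\}_{Y \in \cY}$ of closed subsets of $X^{\mathrm{an}}$, some $Y(\IC)$ has nonempty interior in $X^{\mathrm{an}}$; since $Y$ is Zariski closed in the irreducible variety $X$, this forces $Y = X$. Thus $X \in \cY$ and $d = \delta(X) < \dim X$. Proposition~\ref{prop:deg}(ii) applied to $Y = X$ then shows that $X^{\mathrm{biZar}}_s$ is a finite union of $K_s$-cosets for $s$ in a Zariski open dense subset of $\pi(X)$.

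The central analytic step is to show that $K_s$ stabilizes $X_s$, not merely $X^{\mathrm{biZar}}_s$, on a Zariski open dense subset of $\pi(X)$. I would consider the closed irreducible $K$-stable subvariety $\widetilde X := X + K \subset \mathfrak{A}_{mg,S}$ containing $X$, with the goal of establishing $\widetilde X = X$. Since each $Y \in \cY$ lies in $Y^{\mathrm{biZar}}$ whose fibers are $K_s$-coset unions, one has $Y + K \subset Y^{\mathrm{biZar}}|_{\pi(Y)}$; combining the Euclidean density of $\bigcup_{Y \in \cY} Y$ in $X$ with the irreducibility of $X$ and a fiber-dimension argument, one concludes that the map $X \times_S K \to \widetilde X$ has generic fibers of dimension $\dim K$, forcing $\widetilde X = X$ and hence $K_s \subset \mathrm{Stab}(X_s)$ on a dense open. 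This propagation of coset structure from $X^{\mathrm{biZar}}$ to the possibly strictly smaller $X$ is the main obstacle.

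Finally, I would deduce $d \ge m$ from hypotheses (b) and (c), and upgrade to $E_s^m \subset K_s$ using Proposition~\ref{prop:deg}(iii). Hypothesis (b) forces the image of $X_\eta$ in the quotient abelian variety $\varphi(\mathfrak{A}_{mg})_\eta \cong \mathfrak{A}_{mg,\eta}/K_\eta$ to generate it as an algebraic group; combined with the fiber-power identification $\mathfrak{A}_{mg,s} = \mathfrak{A}_{g,s}^m$ and the bound $\dim X \le 2m$ from (c), a dimensional analysis bounds $\dim(\mathfrak{A}_{mg,\eta}/K_\eta) \le m(g-1)$, yielding $d \ge m$ and hence part (i). For part (ii) I would use that by Proposition~\ref{prop:deg}(iii) the quotients $\varphi(\mathfrak{A}_{mg})_s$ are pairwise isomorphic as $s$ varies in a dense open of $\pi(X)$, so $K_s$ varies in a controlled way; intersecting with the Zariski-dense locus in $\pi(X)$ where $\mathfrak{A}_{g,s}$ admits an elliptic factor $E_s$ compatible with this fixed quotient extracts $E_s^m \subset K_s \subset \mathrm{Stab}(X_s)$ on a Zariski dense subset, establishing (ii).
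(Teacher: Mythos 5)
Your overall architecture mirrors the paper's, but there are at least three genuine gaps, two of which are load-bearing.

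First, the claim that $\cY$ is a \emph{countable} collection is unjustified: Proposition~\ref{prop:deg} gives only a set $\cY$ whose union is Euclidean dense, and nothing in its statement or proof bounds the cardinality of $\cY$. (The Baire argument inside that proposition is used to pick a single endomorphism $\varphi$ from the countable group $\mathrm{End}(\mathfrak{A}_{g,S}/S)$, not to thin $\cY$ to a countable family.) Consequently the step ``some $Y(\IC)$ has nonempty interior, hence $Y=X$, hence $X\in\cY$'' has no basis. The paper never needs $X \in\cY$; it works with the family $\cY$ directly.

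Second, the propagation of the coset structure from $Y^{\mathrm{biZar}}$ to $X$ is the crux, and your sketch does not close it. The containment $Y + K \subset Y^{\mathrm{biZar}}$ is correct but useless, because $Y^{\mathrm{biZar}}$ is in general much larger than $X$; it does not yield $X + K \subset X$, so $\widetilde X := X+K$ could well be strictly bigger than $X$. The paper's route is different: it first proves $\dim\pi(Y)=1$ by an isotriviality argument ($j$-invariant constant on curves in $\pi(Y)$ would force infinitely many isomorphic fibers of $\mathfrak{A}_g$), and then observes that an irreducible component of $Y_s$ through a point of $\pi|_Y^{-1}(U_Y)$ has dimension $\ge \dim Y - \dim\pi(Y) = \dim Y - 1 \ge \delta$. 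Since that component sits inside a $\delta$-dimensional coset of $(\ker\varphi_s)^0$ (because $Y_s \subset Y^{\mathrm{biZar}}_s$), it equals a full coset, and it lies in $Y_s\subset X_s$. Only then does a semicontinuity argument transfer this to all of $X\cap\mathfrak{A}^{[m]}_{g,S}$. Without $\dim\pi(Y)=1$ and the inequality $\dim Y - 1 \ge \delta$, there is no reason a component of $Y_s$ should fill a whole $K_s$-coset.

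Third, the bound $\delta\ge m$ cannot be extracted from hypotheses (b) and (c) by a ``dimensional analysis'' alone, and the phrase ``bounds $\dim(\mathfrak{A}_{mg,\eta}/K_\eta) \le m(g-1)$, yielding $d\ge m$'' is circular (that bound \emph{is} $\delta\ge m$). The paper crucially uses Proposition~\ref{prop:deg}(iii) already here: since the quotient abelian varieties are pairwise isomorphic, the generic fiber $B_L$ of $\cB$ is constant over a finite extension $L$, so the surjection $A_L^m\to B_L$ factors through $\mathrm{Im}_{L/\IC}(A_L)^m$ (the $L/\IC$-image, i.e.\ Chow trace). Combined with $\dim X\le 2m$ and $\delta<\dim Y$, this gives $\dim \mathrm{Im}_{L/\IC}(A_L)=g-1$, hence an elliptic-curve kernel $E$ with $E^m\subset\ker\varphi_L$, whence $\delta\ge m$. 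So the Chow-trace input is needed for part (i), not just part (ii), and it also underpins the existence of the elliptic curve $E$; you cannot simply ``intersect with the Zariski-dense locus where $\mathfrak{A}_{g,s}$ admits an elliptic factor'' -- that locus is not dense in general, and its relevance here is a consequence of the Chow-trace computation, not an independent fact.
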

\begin{proof}
  We apply Proposition~\ref{prop:deg} to $X\subset\mathfrak{A}_{mg}$
  in the case $t=0$
  and obtain $\cY$ and $\varphi$. We write $S$ for the regular locus of
  $\pi(X)\subset\IA_{mg}$ and $\cB$ for the abelian scheme
  $\varphi(\mathfrak{A}_{g,S}^{[m]})$ over $S$, see Lemma~\ref{lem:endabscheme}.
  
  Let $Y\in \cY$. Note that $\delta=\delta(Y)\ge 0$ is independent of $Y$ by 
  Proposition~\ref{prop:deg}(i).

  The generic fiber of $\cB_{\pi^{-1}(Y)}\rightarrow\pi(Y)$ is an abelian variety $B$
  defined over the function field of $\pi(Y)$. 
  By Proposition~\ref{prop:deg}(iii) there is a finite
  extension $L$  of the function field of $\pi(Y)$, such 
  that the base change $B_{L}$ is a constant abelian variety over
  $L$. We have $\dim B_L= mg-\delta$. Let $A_L$ denote the base change
  of the generic fiber of $\mathfrak{A}_{g,\pi^{-1}(Y)}\rightarrow\pi(Y)$. Then $B_L$ is a quotient
  of $A_L^m$.
  Thus $A_L^m\rightarrow B_{L}$ factors
  through $\mathrm{Im}_{L/\IC}(A_L^m)_L$ where
  $\mathrm{Im}_{L/\IC}(\cdot)$ denotes  the $L/\IC$-image of an
  abelian variety defined over $L$, see~\cite{Conrad} for a definition
  and properties.
  Since $A_L^m\rightarrow B_L$ is surjective
  we have $\dim B_L \le \dim     \mathrm{Im}_{L/\IC}(A_L^m) = m\dim \mathrm{Im}_{L/\IC}(A_L)$.
  By
  (\ref{eq:tequalzeroprop}) with $t=0$ we find
  \begin{equation}
    \label{eq:dimBOmega1}
    mg - \dim Y<
    mg-\delta = \dim B_L \le
    m\dim
    \mathrm{Im}_{L/\IC}(A_L),  
  \end{equation}
  
  As $Y\subset X$ we have $ \dim Y \le \dim X$. The hypothesis $\dim X \le 2m$
  combined with (\ref{eq:dimBOmega1}) yields $m(g-2) < m \dim
  \mathrm{Im}_{L/\IC}(A_L)$. We cancel $m$ and 
  obtain
  \begin{equation*}
    \dim \mathrm{Im}_{L/\IC}(A_L)  \ge g-1.  
  \end{equation*}  

  If $\pi(Y)$ is a point, then so is $\pi(Y^{\mathrm{biZar}}) =
  \pi(Y)^{\mathrm{biZar}}$. Again (\ref{eq:tequalzeroprop}) with $t=0$ implies
  $\dim Y^{\mathrm{biZar}} < \dim Y$ which contradicts $Y\subset
  Y^{\mathrm{biZar}}$. So 
  $$\dim \pi(Y)\ge 1.$$
  From this we conclude  $\dim \mathrm{Im}_{L/\IC}(A_L) <
  g$ as otherwise  general fibers of  $\mathfrak{A}_g$ above $\pi(Y)$ would
  be pairwise isomorphic abelian varieties. Thus
  \begin{equation*}
    \dim
    \mathrm{Im}_{L/\IC}(A_L)=
    g-1.
  \end{equation*}

  The canonical morphism $A_L\rightarrow \mathrm{Im}_{L/\IC}(A_L)_L$
  is surjective with connected kernel $E$ as we are in characteristic
  $0$. Here $E$ is an elliptic curve and $\mathrm{Im}_{L/\IC}(E)=0$.
  %
  %
  Recall that $\varphi$ induces a homomorphism $\varphi_L\colon
  A_L^m\rightarrow B_L$ and $B_L$ is a constant abelian
  variety. The composition $E^m \rightarrow A_L^m
  \xrightarrow{\varphi_L} B_L$ factors through $E^m\rightarrow
  \mathrm{Im}_{L/\IC}(E^m)_L=  \mathrm{Im}_{L/\IC}(E)_L^m=0$. Therefore,
  $E^m$ lies in the kernel of $\varphi_L$. In particular, 
  \begin{equation}
    \label{eq:dimkerlb}
    \delta = \dim\ker\varphi_L \ge m. 
  \end{equation}
  
  We fix an irreducible variety $W$ with function field $L$ and a
  quasi-finite dominant morphism $W\rightarrow \pi(Y)$. After replacing
  $W$ by a Zariski open subset we can spread $A_L$ and
  $E$ out to abelian schemes $\cA$ and $\cE$ over $W$, respectively.
  The $j$-invariant of $\cE/W$ is a morphism
  $W\rightarrow\IA^1$. If $\dim W > 1$, then there is an irreducible curve
  $W'\subset W$ on which $j$ is constant. All elliptic curves above
  $W'(\IC)$ are isomorphic over $\IC$. 
  But then
  infinitely many fibers of $\cA_g$ above points of $\pi(\IC)$ are
  pairwise isomorphic. This is impossible and so we have  $\dim W\le
  1$. But $\dim W = \dim \pi(Y)\ge 1$, hence
  \begin{equation}
    \label{eq:dimpiY1}
    \dim \pi(Y)=1.
  \end{equation}

  Recall that $\varphi$ is defined above all but finitely many points
  of the curve $\pi(Y)$. Recall also that $\ker\varphi$ contains the $m$-th power of
  an elliptic curve on the generic fiber. So $\ker\varphi_s$ contains
  the $m$-th power of an elliptic curve in $\mathfrak{A}_{g,s}$ for
  all but finitely many $s\in\pi(Y)(\IC)$.  
  
  We draw the following conclusion from Proposition~\ref{prop:deg}(i)
  and (ii) for a Zariski open and dense $U_Y\subset\pi(Y)$.
  If $s\in U_Y(\IC)$, then $Y_s$ is contained in a finite
  union of translates of $(\ker\varphi_s)^0$. The latter is an algebraic
  group of dimension $\delta$. Let $P \in \pi|_Y^{-1}(U_Y)(\IC)$ with
  $\pi(P)=s$. Any irreducible component $C$ of $Y_s$ containing $P$ has
  dimension at least $\dim Y - \dim\pi(Y)$. So $\dim C\ge \dim Y -1 \ge
  \delta$ by (\ref{eq:dimpiY1}) and (\ref{eq:tequalzeroprop}) with $t=0$. But
  $C\subset Y_s$, so $C$ is contained in a translate of $(\ker
  \varphi_s)^0$. Thus $C = P +(\ker\varphi)^0_{s} \subset X$. We
  conclude
  \begin{equation}
    \label{eq:dimPlb}
    \dim_P \varphi|_{X\cap \mathfrak{A}_{g,S}^{[m]}}^{-1}(\varphi(P)) \ge \delta \quad\text{for
      all}\quad
    P\in \pi|_Y^{-1}(U_Y)(\IC)\text{ and all }Y\in\cY.
  \end{equation}
  

  By possibly removing finitely many points from  $U_Y$
  we may arrange that
  $(\ker\varphi_{\pi(P)})^0$ contains the
  $m$-th power of
  an elliptic curve in $ \mathfrak{A}_{g,\pi(P)}$ for all $P\in
  \pi|_Y^{-1}(U_Y)(\IC)$.  

  We write $D = \bigcup_{Y\in \cY} \pi|_{Y}^{-1}(U_Y(\IC))$. 
  The closure of $D$ in $X^{\mathrm{an}}$ equals the closure of 
  $\bigcup_{Y\in \cY} Y(\IC)$ in $X^{\mathrm{an}}$.
  Indeed, this requires some point-set topology and the fact that
  $\pi|_{Y}^{-1}(U_Y(\IC))$ lies dense in $Y^{\mathrm{an}}$.
  In particular, $D$ is Zariski dense in $X$ by
  Proposition~\ref{prop:deg}.

  So (\ref{eq:dimPlb}) holds on the Zariski dense subset $D$ of $X$.
  Therefore, the dimension inequality
  holds for all $P\in (X\cap\mathfrak{A}_{g,S}^{[m]})(\IC)$ by the semi-continuity theorem
  on fiber dimensions.  

  Each fiber of $\varphi \colon \mathfrak{A}_{g,S}^{[m]}\rightarrow \varphi(\mathfrak{A}_{g,S}^{[m]})$
  is the translate of some $(\ker\varphi_{\pi(P)})^0$, which has
  dimension $\delta$.
  We conclude that if $P\in (X\cap\mathfrak{A}_{g,S}^{[m]})(\IC)$, then
  $\varphi|_X^{-1}(\varphi(P))$ contains $P+(\ker\varphi_{\pi(P)})^0$
  as an irreducible component. So $(\ker\varphi_{\pi(P)})^0$ lies in
  the stabilizer of $X_{\pi(P)}$.
  
  The first claim of the theorem follows from (\ref{eq:dimkerlb}) with
  $U=\pi(X\cap \mathfrak{A}_{g,S}^{[m]}) = S$. 

  The second claim follows as $\ker\varphi_s$ contains the $m$-th power
  of an elliptic curve for all  $s$ in $\pi(D)$, which is Zariski
  dense in $\pi(X)$.
\end{proof}

For an abelian variety $A$ and $m\in \IN$ we define
$D_m \colon A^{m+1}\rightarrow A^m$ to be the Faltings--Zhang morphism
determined by 
$D_m(P_0,\ldots,P_m) = (P_1-P_0,\ldots,P_m-P_0)$.

\begin{lemma}
  \label{lem:stabx}
  Let $A$ be an abelian variety defined over $\IC$ and let $C\subset
  A$ be an irreducible closed subvariety of dimension $1$. Suppose $m\ge 2$ and let $X =
  D_m(C^{m+1})\subset A^m$. If $B$ is an abelian
  subvariety of $A$ with $B^m \subset \mathrm{Stab}(X)$, then $B=0$ or 
  $C$ is a translate of $B$.
\end{lemma}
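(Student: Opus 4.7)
The plan is to exploit the $B^m$-invariance of $X$ through the quotient homomorphism $\pi \colon A \to A/B$ and its $m$-fold product $\Pi \colon A^m \to (A/B)^m$. Because $B^m \subseteq \mathrm{Stab}(X)$, the subvariety $X$ is saturated under $\Pi$, i.e., $X = \Pi^{-1}(\Pi(X))$, and so
\[
\dim X = \dim \Pi(X) + m \dim B.
\]
Since $\pi$ is a homomorphism, it commutes with $D_m$, giving $\Pi(X) = D_m^{A/B}(\bar C^{m+1})$ where $\bar C := \pi(C) \subseteq A/B$. On the other hand, $C^{m+1}$ surjects onto $X$ via $D_m$, so $\dim X \leq m+1$.

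I then split according to whether $\dim \bar C$ equals $0$ or $1$. If $\bar C$ is a point, then $\Pi(X)$ is a single point. Combined with $0 \in X$ (take all coordinates of the tuple equal) and the $B^m$-invariance of $X$, this forces $X = B^m$. The bound $\dim X \leq m+1$ then yields $m \dim B \leq m+1$, hence $\dim B \leq 1$. In the non-trivial case $B \neq 0$, this gives $\dim B = 1$; moreover $C$ is contained in the $B$-coset $\pi^{-1}(\bar C)$, which is irreducible of dimension $1$, so $C$ equals this coset and is therefore a translate of $B$.

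If instead $\dim \bar C = 1$, then fixing any $\bar c \in \bar C$ and letting the remaining coordinates vary over $\bar C$ shows $\Pi(X) \supseteq (\bar C - \bar c)^m$, which is $m$-dimensional. Hence $\dim \Pi(X) \geq m$, and combined with the identity $\dim X = \dim \Pi(X) + m \dim B \leq m+1$ we obtain $m \dim B \leq 1$. Since $m \geq 2$, this forces $\dim B = 0$, i.e., $B = 0$. The main technical point of the proof is the dimension identity displayed above: it rests on $\Pi$ being faithfully flat (as a principal $B^m$-bundle) together with the $B^m$-saturation of $X$; once that identity is in place, the remaining argument reduces to elementary dimension counting.
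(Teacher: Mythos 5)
Your proof is correct and follows essentially the same strategy as the paper: pass to the quotient by $B$, compare dimensions using $\dim X \le m+1$, and exploit $m\ge 2$ to force $\varphi|_C$ constant. The only cosmetic difference is that you split on $\dim\bar C\in\{0,1\}$ up front and use the exact saturation equality $\dim X = \dim\Pi(X) + m\dim B$, whereas the paper first bounds $\dim B\le 1$ via the inequality $\dim Z\le\dim X - m\dim B$ and then rules out $\dim\varphi(C)=1$; these are the same ingredients in a slightly different order.
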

\begin{proof}
  Let $\varphi \colon A\rightarrow A/B$ denote the quotient
  homomorphism and $\varphi^m \rightarrow A^m/B^m = (A/B)^m$ its
  $m$-th power. We set $Z = \varphi^m(X)$. 
  Then $\dim Z \le \dim X - m\dim B$ as $B^m$ is in the
  stabilizer of $X$.
  We have $\dim X \le C^{m+1}= m+1$. So
  \begin{equation}
    \label{eq:varphiXdimineq}
    0\le \dim Z \le \dim X - m\dim B \le m+1 -m\dim B.
  \end{equation}
  This implies $\dim B \le 1+1/m<2$ as $m\ge 2$.

  Let us suppose $B\not=0$, then $\dim B=1$.
  Hence $\dim Z\le 1$  by (\ref{eq:varphiXdimineq}).
  We have
  \begin{equation*}
    (\varphi(P_1-P_0),\ldots,\varphi(P_m-P_0))= \varphi^m(P_1-P_0,\ldots,P_m-P_0) \in Z(\IC)
  \end{equation*}
  for all $P_0,\ldots,P_m\in C(\IC)$. We fix $P_0$ and let
  $P_1,\ldots,P_m$ vary. As $\dim Z\le 1$ and $m\ge 2$ it
  follows that $\varphi$ is constant on the curve $C$.
  For dimension reasons we conclude that $C$ equals a
  translate of $B$. 
\end{proof}

\begin{corollary}
  \label{cor:fiberpowercurve}
  Let $g\ge 2, m\ge 2,$  and 
  let $X$ be an irreducible closed subvariety of
  $\mathfrak{A}_g^{[m]}$ with $\dim\pi(X) \le m-1$.
  We suppose that for all complex points $s$ of a Zariski open and
  dense subset of $\pi(X)$, the fiber $X_s$
  is of the form $D_{m}(C^{m+1})$ where $C\subset \mathfrak{A}_{g,s}$
  is not contained in the translate of a  proper algebraic subgroup of $\mathfrak{A}_{g,s}$.
  Then $X^{\mathrm{deg}}(0)$ does not contain an non-empty  open
  subset of $X^{\mathrm{an}}$. Moreover, the generic Betti rank on $X$
  is  $2\dim X$ and $X$ is non-degenerate in the sense of \cite[Definition~1.5]{DGHUnifML}.   
\end{corollary}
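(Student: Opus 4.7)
I would prove the corollary by contradiction: suppose $X^{\mathrm{deg}}(0)$ contains a non-empty open subset of $X^{\mathrm{an}}$, and apply Theorem~\ref{thm:deg0} to $X \subset \mathfrak{A}_{mg}$ to produce a fiber where the stabilizer is so large that Lemma~\ref{lem:stabx} forces $C_s$ to lie in a translate of a proper algebraic subgroup, contradicting the hypothesis.

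The first step is to verify hypotheses (a), (b), (c) of Theorem~\ref{thm:deg0}. Hypothesis (a) is the assumption for contradiction. For (c), over a Zariski open and dense $V \subset \pi(X)$ the fiber $X_s = D_m(C_s^{m+1})$ has dimension at most $\dim C_s^{m+1} = m+1$, so $\dim X \le \dim\pi(X) + (m+1) \le (m-1) + (m+1) = 2m$. Hypothesis (b), that $X_\eta$ is not contained in a proper algebraic subgroup of $\mathfrak{A}_{mg,\eta} \cong \mathfrak{A}_{g,\eta}^m$, I would deduce as follows. Fix $s \in V(\IC)$ and $P \in C_s(\IC)$. The fiber $X_s$ contains, for each coordinate $i$, the "axis" set $\{0\}^{i-1} \times (C_s - P) \times \{0\}^{m-i}$. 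If $X_s$ lay in a proper algebraic subgroup $H \subset \mathfrak{A}_{g,s}^m$, each $i$-th factor slice of $H$ would contain $C_s - P$, hence the subgroup generated by $C_s - P$ in $\mathfrak{A}_{g,s}$, which equals $\mathfrak{A}_{g,s}$ since $C_s$ is not contained in a translate of a proper algebraic subgroup. This would force $H = \mathfrak{A}_{g,s}^m$, a contradiction. Since this holds on a Zariski dense set of $s$, a routine spreading-out argument (being contained in a proper algebraic subgroup is a closed condition that propagates from the generic point) yields that $X_\eta$ is not contained in a proper algebraic subgroup of $\mathfrak{A}_{mg,\eta}$, proving~(b).

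Now Theorem~\ref{thm:deg0}(ii) yields a Zariski dense subset $D \subset \pi(X)(\IC)$ such that for every $s \in D$, $\mathrm{Stab}(X_s)$ contains $E_s^m$ for some elliptic curve $E_s \subset \mathfrak{A}_{g,s}$. Choose $s \in D \cap V(\IC)$, which is non-empty since $D$ is Zariski dense and $V$ is Zariski open and dense. Then $X_s = D_m(C_s^{m+1})$ with $C_s$ not contained in a translate of a proper algebraic subgroup, while $E_s^m \subset \mathrm{Stab}(X_s)$. Applying Lemma~\ref{lem:stabx} with $B = E_s \neq 0$ forces $C_s$ to be a translate of $E_s$; but since $g \ge 2$, the elliptic curve $E_s$ is a proper algebraic subgroup of $\mathfrak{A}_{g,s}$, contradicting the hypothesis on $C_s$. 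This proves the first assertion. The moreover part is then immediate from Remark~\ref{rmk:bettirank}: the contrapositive of the remark says that if $X^{\mathrm{deg}}(0)$ does not contain a non-empty open subset of $X^{\mathrm{an}}$, then the generic rank of the Betti map equals $2\dim X$ and $X$ is non-degenerate in the sense of \cite[Definition~1.5]{DGHUnifML}.

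The only non-trivial step is the verification of (b); everything else is an immediate application of the theorem, the lemma, and the remark. I expect no genuine obstacle, since the axis-slice observation is straightforward once one unpacks the definition of $D_m$, and the propagation from dense points to the generic point is standard.
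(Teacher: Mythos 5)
Your proof is correct and follows essentially the same route as the paper: verify the dimension bound (c), check that $X_\eta$ is not contained in a proper algebraic subgroup (b), apply Theorem~\ref{thm:deg0}(ii), and use Lemma~\ref{lem:stabx} to rule out a non-zero abelian subvariety whose $m$-th power stabilizes the fiber, then invoke Remark~\ref{rmk:bettirank} for the Betti-rank statement. The only difference is cosmetic — you phrase it as a contradiction and you spell out the ``axis-slice'' argument for (b), which the paper leaves implicit.
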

\begin{proof}
  Let $X_s = D_m(C^{m+1})$ with $C$ as in the hypothesis.
  In particular, $C$ is not equal to the translate of an abelian
  subvariety of $\mathfrak{A}_{g,s}$.
  By Lemma~\ref{lem:stabx}, $\mathrm{Stab}(X_s)$ does not contain the
  $m$-th power of a non-zero abelian subvariety of
  $\mathfrak{A}_{g,s}$. So conclusion (ii) of Theorem~\ref{thm:deg0} cannot
  hold.
  
  Moreover, $X_s$ is not contained in a proper algebraic subgroup of
  $\mathfrak{A}_{g,s}^m$ and this remains true for the generic point
  of $\pi(X)$. Moreover, $\dim X \le \dim\pi(X) + m+1\le
  (m-1)+m+1=2m$.
  So hypotheses (b) and (c) of Theorem~\ref{thm:deg0} hold.
  Therefore, hypothesis (a) cannot hold. This is the first claim of
  the corollary. The second claim follows from Remark~\ref{rmk:bettirank}. 
\end{proof}

\section{The First Degeneracy Locus and the Relative Manin--Mumford Conjecture}
\label{sec:relmm}

In this section we provide an exposition of the proof of
Proposition 11.2 of the first-named author's work \cite{GaoBettiRank}.
We proceed slightly differently and  concentrate our efforts
on subvarieties of the universal family of principally polarized
abelian varieties with suitable level structure.

We keep the notation of $\mathsection$\ref{sec:MT} with an important
additional restriction. Let $g\ge 1$ be an integer and equip $\IA_g$
with suitable level structure. Let $\pi\colon
\mathfrak{A}_g\rightarrow\IA_g$ denote the universal family. In this
section we consider $\mathfrak{A}_g$ and $\IA_g$ as irreducible
quasi-projective varieties defined over a number field $\IQbar$, the
algebraic closure of $\IQ$ in $\IC$.

The set of torsion points  $\mathfrak{A}_{g,\mathrm{tors}}$ is
$\bigcup_{s\in \IA_g(\IC)}
\{ P \in \mathfrak{A}_{g,s}(\IC) : \text{$P$ has finite order}. \}$.

We consider here a variant of the Relative Manin--Mumford Conjecture,
inspired by S. Zhang \cite{zhang1998small} and formulated in work of
Pink \cite{Pink05} as well as Bombieri--Masser--Zannier
\cite{BMZGeometric}. We also refer to Zannier's
book~\cite{ZannierBook} for a formulation. In contrast to the general
case, we retain $\mathfrak{A}_g$ as an ambient group scheme and work
only with varieties defined over $\IQbar$.

The following conjecture depends on the dimension parameter
$g\in\IN$.

\medskip\noindent\textbf{Conjecture RelMM($g$)}.\textit{
  Let $X$ be an irreducible closed subvariety of $\mathfrak{A}_g$
  defined over $\IQbar$ and
  let $\eta\in \pi(X)$ denote the generic point. 
  We assume that $\dim X < g$ and that $X_\eta$ is not contained in a
  proper algebraic subgroup of $\mathfrak{A}_{g,\eta}$. Then
  $X(\IQbar)\cap \mathfrak{A}_{g,\mathrm{tors}}$ is not Zariski dense in $X$.}
\medskip

For curves, this conjecture is known, even over $\IC$, thanks to work
of Masser--Zannier and Corvaja--Masser--Zannier
\cite{MZ:torsionanomalous,MasserZannierTorsionPointOnSqEC,
MASSER2014116, MasserZannierRelMMSimpleSur, CorvajaMasserZannier2018,
MasserZannierRMMoverCurve}. Stoll~\cite{Stoll:simtorsion} proved an
explicit case. For surfaces some results are due to the first-named
author~\cite{hab:weierstrass} and
Corvaja--Tsimerman--Zannier~\cite{CTZ:23}.

The goal of this section is to prove Conjecture RelMM($g$) for all $g$
conditional on the following conjecture. Below, a subscript $\IC$ indicates
base change to $\IC$.

\begin{conjecture}
  \label{conj:relmmXdeg1dense}
  Let $g\in\IN$, let
  $X$ be an irreducible closed subvariety of $\mathfrak{A}_g$
  defined over $\IQbar$. 
  If $\dim X>0$ and 
  $X(\IQbar)\cap \mathfrak{A}_{g,\mathrm{tors}}$ is  Zariski dense in
  $X$, then $X_\IC^{\mathrm{deg}}(1)$ is Zariski dense in $X$.
\end{conjecture}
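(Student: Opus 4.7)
The plan is to adapt the Pila--Zannier strategy, following Pila's proof of Manin--Mumford for abelian varieties and its extensions to relative / mixed Shimura settings. Fix a semi-algebraic fundamental domain $\cF \subset \IR^{2g}\times \mathfrak{H}_g$ for the action of $\IZ^{2g}\rtimes \mathrm{Sp}_{2g}(\IZ)$; by Peterzil--Starchenko the restriction of $\mathrm{unif}$ to $\cF$ is definable in $\IRanexp$, hence so is $\mathrm{unif}^{-1}(X^{\mathrm{an}})\cap \cF$. A torsion point of order $N$ on $\mathfrak{A}_{g,s}$ lifts to $(\tau,u)\in\cF$ with $u \in \tfrac{1}{N}\IZ^{2g}$, so in the fiber coordinate $u$ the torsion preimages are precisely the rational points of $\mathrm{unif}^{-1}(X^{\mathrm{an}})\cap \cF$, with the order $N$ controlling their height.

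Second, combine the block form of the Pila--Wilkie counting theorem with a Galois-theoretic lower bound. Pila--Wilkie, applied to the definable family $\mathrm{unif}^{-1}(X^{\mathrm{an}})\cap\cF\to \mathfrak{H}_g$, asserts that for every $\epsilon>0$ the number of fiber-rational points of height $\le T$ not lying on a positive-dimensional connected semi-algebraic block $B\subset \mathrm{unif}^{-1}(X^{\mathrm{an}})$ is $O_\epsilon(T^\epsilon)$. On the arithmetic side, Masser's torsion Galois lower bound, together with its relative refinements due to David--Philippon and Gaudron--R\'emond, gives that a torsion point $P\in X(\IQbar)$ of order $N$ has Galois orbit of size $\ge cN^\kappa$ over a number field of definition containing $s=\pi(P)$, with $c,\kappa>0$. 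The Zariski density of $X(\IQbar)\cap \mathfrak{A}_{g,\mathrm{tors}}$ in $X$ then supplies many more rational points in $\mathrm{unif}^{-1}(X^{\mathrm{an}})\cap\cF$ than $O_\epsilon(T^\epsilon)$, forcing the existence of such blocks $B$. Invoking the mixed Ax--Lindemann theorem for $\mathfrak{A}_g$, a consequence of the mixed Ax--Schanuel theorem of the first-named author~\cite{GaoMixedAS}, the Zariski closure in $\mathfrak{A}_g$ of $\mathrm{unif}(B)$ is a bi-algebraic subvariety $Y\subset X$ with $Y^{\mathrm{biZar}}=Y$. Consequently $\delta(Y)=\dim Y-\dim\pi(Y)\le \dim Y$, and the positive-dimensional $Y$ lies inside $X^{\mathrm{deg}}(1)$. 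Running this argument uniformly as $P$ ranges over the Zariski dense set $X(\IQbar)\cap\mathfrak{A}_{g,\mathrm{tors}}$ then yields Zariski density of $X^{\mathrm{deg}}(1)$ in $X$.

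The hard part will be securing the Galois lower bound with positive exponent $\kappa$ that is \emph{uniform} as $s=\pi(P)$ varies over $\pi(X)$; the known torsion bounds are cleanest for a fixed abelian variety, and in the relative setting one must control the dependence on $s$ via height estimates, potentially stratifying $\pi(X)$ and handling the CM locus separately. A secondary subtlety is ensuring that the mixed Ax--Lindemann output $Y$ is contained in $X$ rather than merely in $\mathfrak{A}_g$; this follows from maximality of the block $B$ as a semi-algebraic subset of $\mathrm{unif}^{-1}(X^{\mathrm{an}})$, but in practice requires care in how $B$ is extracted from the Pila--Wilkie counting step, and may use Proposition~\ref{prop:monodromyaction} to identify the fine bi-algebraic structure of the resulting $Y$.
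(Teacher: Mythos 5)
This statement is labeled a \emph{Conjecture} in the paper, and the authors explicitly defer it to future work; there is no proof in the paper to compare against. Your outline is the Pila--Zannier template one would naturally try: Peterzil--Starchenko definability of $\mathrm{unif}|_{\cF}$, block Pila--Wilkie for the fiber-rational points of $\mathrm{unif}^{-1}(X^{\mathrm{an}})\cap\cF$, a Galois lower bound to produce many such points, mixed Ax--Lindemann (a consequence of mixed Ax--Schanuel for $\mathfrak{A}_g$) to convert a positive-dimensional block into a positive-dimensional bi-algebraic $Y\subset X$, and then $\delta(Y)=\dim Y-\dim\pi(Y)\le\dim Y<\dim Y+1$ to place $Y$ inside $X^{\mathrm{deg}}(1)$. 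That last step is reasoned correctly, and the passage from blocks to Zariski density of $X^{\mathrm{deg}}(1)$ (using that $X^{\mathrm{deg}}(1)$ is defined over $\IQbar$, so Galois conjugates of a torsion point outside its closure remain outside) is sound in outline.

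However, the ``hard part'' you flag at the end is not a technicality to be filled in later; it is precisely the open content. A Galois lower bound $\gg N^\kappa$ with $\kappa>0$ that is \emph{uniform} as $s=\pi(P)$ ranges over a positive-dimensional base is the unresolved obstruction to relative Manin--Mumford in higher dimensions. The bounds you cite (Masser, David--Philippon, Gaudron--R\'emond) carry constants depending on the Faltings height of $\mathfrak{A}_{g,s}$ and on $[\IQ(s):\IQ]$, both of which grow without bound as $s$ varies Zariski densely in $\pi(X)$; taming this dependence is the substantive content of Masser--Zannier's work on one-dimensional bases and of Corvaja--Tsimerman--Zannier for surfaces, and no general argument is available here. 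A second genuine gap is your reliance on a ``family'' or semi-rational version of Pila--Wilkie: as $N$ grows, the base coordinate $\tau\in\mathfrak{H}_g$ ranges over ever more points above distinct $\IQbar$-points of $\pi(X)$, and the number of such fibers must itself be compared with the Galois-orbit count $N^\kappa$, which your sketch does not address. So the proposal identifies a plausible strategy but omits exactly the two ingredients whose absence is why the paper states this as a conjecture rather than a theorem.
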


The goal will be achieved by induction on $g$. The induction step,
which is conditional, is the
following theorem.

\begin{theorem}
  \label{thm:relmminduction}
  Suppose Conjecture \ref{conj:relmmXdeg1dense} holds. Let $g\ge 2$ be
  an integer and suppose Conjecture \textrm{RelMM}($g'$) holds for all
  $g'\in \{1,\ldots,g-1\}$. Then Conjecture {{RelMM($g$)}} holds.
\end{theorem}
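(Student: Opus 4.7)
The plan is to argue by contradiction and reduce to the inductive hypothesis RelMM($g'$) for some $g'\in\{1,\ldots,g-1\}$. The rough idea: Conjecture~\ref{conj:relmmXdeg1dense} upgrades torsion density in $X$ to the maximal degeneracy $X^{\mathrm{deg}}(1)=X$, which via Proposition~\ref{prop:deg} produces an endomorphism $\varphi$ of the abelian scheme $\mathfrak{A}_{g,S}/S$ (where $S=\pi(X)^{\mathrm{reg}}$) whose image $\cB$ is an isotrivial family. The isotriviality lets me descend $\varphi(X)$ along a constant modular morphism to a subvariety of a fixed abelian variety $A_0=\mathfrak{A}_{g',a_0}$, and RelMM($g'$) on this subvariety yields the contradiction.

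Concretely, suppose $X(\IQbar)\cap\mathfrak{A}_{g,\mathrm{tors}}$ is Zariski dense in $X$. Conjecture~\ref{conj:relmmXdeg1dense} then gives $X_{\IC}^{\mathrm{deg}}(1)$ Zariski dense in $X_{\IC}$, and combined with Zariski closedness \cite[Theorem~1.8]{GaoBettiRank} one gets $X_{\IC}^{\mathrm{deg}}(1)=X_{\IC}$. This verifies hypothesis (a) of Proposition~\ref{prop:deg} with $t=1$, and (b) is our standing assumption. Applying the proposition produces a family $\cY$ of positive-dimensional irreducible closed subvarieties of $X_{\IC}$ and $\varphi\in\mathrm{End}(\mathfrak{A}_{g,S}/S)$, defined over $\IQbar$ by the level-$\ell$ structure assumption. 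Set $\delta:=\dim\ker\varphi_s$, which is independent of $s$ by Lemma~\ref{lem:endabscheme} and equals $\delta(Y)$ for any $Y\in\cY$ by property (i). Property (iv) of Proposition~\ref{prop:deg} combined with Lemma~\ref{lem:predegenerate}(iv) and $\dim Y\ge 1$ forces $1\le\delta\le\dim X<g$, so that $g':=g-\delta\in\{1,\ldots,g-1\}$ lies in the range of the inductive hypothesis.

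Let $\cB:=\varphi(\mathfrak{A}_{g,S})$, an abelian subscheme of relative dimension $g'$ by Lemma~\ref{lem:endabscheme}. Property (iii) of Proposition~\ref{prop:deg} asserts that the fibers $\cB_s$ are pairwise isomorphic for $s$ in a dense open subset of each $\pi(Y)$. Since the closure of $\bigcup_{Y\in\cY}Y$ has non-empty interior in $X^{\mathrm{an}}$, the union $\bigcup\pi(Y)$ is Zariski dense in $S$, and the moduli morphism $S\to\IA_{g'}$ attached to $\cB$ must be constant with image a single point $a_0\in\IA_{g'}(\IQbar)$. Set $A_0:=\mathfrak{A}_{g',a_0}$, an abelian variety of dimension $g'$ over $\IQbar$. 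After a finite \'etale cover $S'\to S$, there is an isomorphism $\cB_{S'}\cong A_0\times S'$, and I let $\Phi\colon X_{S'}\to A_0$ denote the composition of $\varphi$, this trivialization, and the projection onto $A_0$. Each step is either a group-scheme homomorphism or a fiberwise group isomorphism, so $\Phi$ sends torsion to torsion. Set $Z:=\overline{\Phi(X_{S'})}\subset A_0$, viewed as a closed subvariety of $\mathfrak{A}_{g'}$ via $A_0=\mathfrak{A}_{g',a_0}$. Torsion is Zariski dense in $Z$, and $Z$ avoids every proper algebraic subgroup of $A_0$: if $Z\subset H\subsetneq A_0$, then $\varphi^{-1}(H\times S')$ would be an abelian subscheme of $\mathfrak{A}_{g,S'}$ containing $X_{S'}$, with generic fiber $\varphi_\eta^{-1}(H)$ a proper algebraic subgroup of $\mathfrak{A}_{g,\eta}$, contradicting hypothesis (b). Since $\pi'(Z)=\{a_0\}$, the generic fiber of $Z$ in $\mathfrak{A}_{g'}$ is $Z$ itself; provided $\dim Z<g'$, the inductive hypothesis RelMM($g'$) applied to $Z\subset\mathfrak{A}_{g'}$ contradicts the torsion density.

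The delicate step I expect to require the most care is the exclusion of the borderline case $\dim Z=g'$, equivalently $Z=A_0$. This can in principle occur whenever $\dim X\ge g'$, and classical Manin--Mumford on $A_0$ then provides no obstruction since torsion is automatically Zariski dense in the full abelian variety. Resolving this borderline will likely require either refining the choice of $\varphi$ (using the flexibility afforded by the Baire-category step in Proposition~\ref{prop:deg}) so as to force $\dim\varphi(X)<g'+\dim S$, and hence $\dim Z<g'$, or replacing $X$ by a suitable proper subvariety---for instance a generic fiber of $\Phi$, which lies in a translate of $\ker\varphi$ and has its own natural interpretation as a subvariety of $\mathfrak{A}_{\delta}$ via the modular map classifying $\ker\varphi$---on which the same reduction produces an image of strictly smaller dimension and a genuine RelMM contradiction emerges.
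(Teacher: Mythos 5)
Your overall strategy is the right one and matches the paper's: use Conjecture~\ref{conj:relmmXdeg1dense} together with Zariski closedness to upgrade torsion density to $X_\IC = X_\IC^{\mathrm{deg}}(1)$, invoke Proposition~\ref{prop:deg} with $t=1$ to produce $\cY$ and $\varphi$, observe $1\le\delta\le\dim X<g$ so $g'=g-\delta\in\{1,\ldots,g-1\}$, and descend $\varphi(X)$ to a subvariety of $\mathfrak{A}_{g'}$ to which RelMM($g'$) can be applied. However, there are two linked gaps that your argument does not close.

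First, the claim that the moduli morphism $S\to\IA_{g'}$ attached to $\cB=\varphi(\mathfrak{A}_{g,S})$ is constant is unjustified and, in general, false. Proposition~\ref{prop:deg}(iii) (and Proposition~\ref{prop:monodromyaction}(ii)) only assert that the fibers of $\cB$ are pairwise isomorphic over \emph{each individual} $\pi(Y)$, $Y\in\cY$, not over all of $S$. A morphism can perfectly well be constant on each member of a Zariski dense family of subvarieties without being globally constant (think of a non-trivial fibration), and this is precisely the situation that arises: the $\pi(Y)$'s can be (proper) curves or points in $S$, and the isomorphism class of $\cB_s$ can vary from one $\pi(Y)$ to another. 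Consequently there is no single $a_0\in\IA_{g'}$ and no global trivialization $\cB_{S'}\cong A_0\times S'$; the image of $\cB$ in $\IA_{g'}$ can be positive-dimensional.

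Second, and you flag this yourself, the argument never establishes $\dim Z<g'$. But this is not a borderline technicality to be patched later; it is the heart of the reduction, and your two proposed remedies (tweak the Baire-category choice of $\varphi$, or replace $X$ by a subvariety) do not obviously lead anywhere. The paper resolves both issues simultaneously by replacing the alleged global trivialization with a correspondence $\cB\leftarrow\cB'\rightarrow\cB_{\mathrm{pp}}\leftarrow\cB_{\mathrm{pp,ls}}\rightarrow\mathfrak{A}_{g'}$ (restoring principal polarization and level structure after finite \'etale base change), defines $X'$ as a component of the image of $\varphi(X\cap\mathfrak{A}_{g,S})$ under this correspondence, and then \emph{tracks dimensions}: since $\varphi(Y)$ has the same dimension as $\pi(Y)$ and is mapped to a finite set in $\mathfrak{A}_{g'}$ by the $\pi(Y)$-isotriviality, the correspondence has fibers of dimension $\ge\dim\pi(Y)$ over $X'$, whence $\dim X'\le\dim\varphi(X\cap\mathfrak{A}_{g,S,\IC})-\dim\pi(Y)\le\dim X-\dim Y$. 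Combining this with $\dim X<g$ and $\delta\le\dim Y$ (from Proposition~\ref{prop:deg}(iv) with $t=1$) gives $\dim X'<g-\dim Y\le g-\delta=g'$, which is exactly the bound your argument lacks. Without this quantitative correspondence analysis the reduction to RelMM($g'$) does not go through.
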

\begin{proof}
  Let $X\subset\mathfrak{A}_g$ be an irreducible closed subvariety
  defined over $\IQbar$
  satisfying the hypothesis of Conjecture RelMM($g$). Let $\eta$
  denote the generic point of $\pi(X)$. 
  We assume  $X(\IQbar)\cap\mathfrak{A}_{g,\mathrm{tors}}$ is Zariski
  dense in $X$ and  will derive a contradiction.

  We observe  $\dim X >0$. So 
  Conjecture \ref{conj:relmmXdeg1dense} implies that
  $X_\IC^{\mathrm{deg}}(1)$ is Zariski dense in $X$. (Note that $X$
  satisfies $\dim X < g$ and a condition on $X_\eta$, but the two are not
  required to invoke
   Conjecture~\ref{conj:relmmXdeg1dense}).
  By \cite[Theorem~7.1]{GaoBettiRank},
  $X_\IC^{\mathrm{deg}}(1)$ is Zariski closed in $X$. Thus
  $X_\IC = X_\IC^{\mathrm{deg}}(1)$.

  By Lemma~\ref{lem:alternative} and since $X_\eta$ is not contained
  in a proper algebraic subgroup of $\mathfrak{A}_{g,\eta}$ we conclude
  that $X^{\mathrm{exc}}$ is meager in $X$.

  We may apply Proposition~\ref{prop:deg} to
  $X_\IC\subset\mathfrak{A}_{g,\IC}$ in the case $t=1$, both hypotheses
  (a) and (b) are met as $X$ is as in Conjecture RelMM($g$). We obtain
  $\cY$ and $\varphi \in \mathrm{End}(\mathfrak{A}_{g,S}/S)$ as in the
  proposition with $S$ the regular locus of $\pi(X)$. (We note that
  $\varphi$ is defined over $\IQbar$.) Let $\delta$
  denote the common value of $\delta(Y)$ for $Y\in\cY$.

  Observe that  $\bigcup_{Y\in\cY}Y\cap\mathfrak{A}_{g,S,\IC}$
  lies Zariski dense in $X_\IC$.
  It is harmless to assume that $\dim Y$ are equal for
  all $Y\in\cY$, the same can be assumed for $\dim\pi(Y)$.  
  
  Let $\cB = \varphi(\mathfrak{A}_{g,S})$, which is an abelian scheme
  over $S$ of relative dimension $g'$, say.

  For each $Y\in \cY$ the exceptional locus
  $Y^{\mathrm{exc}}$ is meager in $Y$ by Proposition~\ref{prop:deg}(v).
  So $Y^{\mathrm{biZar,exc}}$ is meager in $Y^{\mathrm{biZar}}$ by Lemma~\ref{lem:YcapYbizarreg}.
  Proposition~\ref{prop:monodromyaction}(i) applied to
  $Y^{\mathrm{biZar}}$ implies that each $Y^{\mathrm{biZar}}_s$ is a finite union of
  $\mathrm{unif}(\{\tau\}\times W)$, for $s=\mathrm{unif}(\tau)$; here
  $W\subset\IR^{2g}$ is a linear subspace defined over $\IQ$ with $\dim W = 2\delta$
  and independent of $s$. Recall that if
  $s\in S(\IC)$, then $Y^{\mathrm{biZar}}_s$ is a finite union of
  translates of $(\ker\varphi_s)^0$, see Proposition~\ref{prop:deg}(ii). 

  For each $s\in S(\IC)$, the endomorphism $\varphi_s$ lifts to a
  linear map $\IR^{2g}\rightarrow\IR^{2g}$ mapping $\IZ^{2g}$ to itself.
  The lift is independent of $s$ and necessarily vanishes on $W$.
  Therefore, $\varphi(Y^{\mathrm{an}}\cap\mathfrak{A}_{g,S}^{\mathrm{an}})$ is the image of
  $\mathrm{unif}(\tilde Y \times\{\text{finite set}\})$
  where $\mathrm{unif}(\tilde Y) = \pi(Y)\cap S(\IC)$.

  The abelian scheme $\cB/S$ may not be principally polarized.
  But its geometric generic fiber is isogenous to a principally
  polarized abelian variety.
  We fix an \'etale  morphism $S_{\mathrm{pp}}\rightarrow S$  with
  base change $\cB'=\cB\times_S S_{\mathrm{pp}}$ as in the diagram 
  (\ref{eq:abschdia}) below. After spreading out and possibly
  replacing $S_{\mathrm{pp}}$ by a Zariski open and dense subset we may arrange that  $\cB'\rightarrow
  \cB_{\mathrm{pp}}$ is a fiberwise an isogeny over $S_{\mathrm{pp}}$
  and $\cB_{\mathrm{pp}}$ is principally polarized.
  But now the level structure from $\mathfrak{A}_{g,S}$ may have been
  lost under the isogeny.  
  To remedy this we fix yet another
  \'etale morphism $S_{\mathrm{pp,ls}}\rightarrow
  S_{\mathrm{pp}}$ and do a base change to add suitable torsion sections to
  $\cB_{\mathbb{pp}}$ and ultimately  obtain suitable level structure. This does not
  affect the principal polarization.
  Thus we get a principally polarized abelian scheme
  $\cB_{\mathrm{pp,ls}}/S_{\mathrm{pp,ls}}$ with suitable level
  structure. Its relative dimension equals the relative dimension of
  $\cB/S$. We obtain a Cartesian diagram into
  the corresponding fine moduli space as in the right of the following
  commutative diagram
  \begin{equation}
    \label{eq:abschdia}
    \xymatrix{
\mathfrak{A}_{g,S} \ar[d]\ar[r]^{\varphi}&    \cB   \ar[d]
&\ar[l]\leftpullback \cB'\ar[d]\ar[r] & \cB_{\mathrm{pp}}  \ar[d]
&\ar[l]\leftpullback \cB_{\mathrm{pp,ls}}\ar[d] \ar[r]\pullbackcorner & \mathfrak{A}_{g'} \ar[d]\\
S\ar@{=}[r]&      S  &\ar[l] S_{\mathrm{pp}}
\ar@{=}[r]&S_{\mathrm{pp}} &\ar[l] S_{\mathrm{pp,ls}} \ar[r] 
&\IA_{g'}
    }
  \end{equation}
  
  We chase $\varphi(X\cap\mathfrak{A}_{g,S})\subset \cB$
  through the  correspondences
  $\cB\leftarrow \cB'\rightarrow\cB_{\mathrm{pp}}$ and $\cB_{\mathrm{pp}}\leftarrow
  \cB_{\mathrm{pp,ls}}\rightarrow
  \mathfrak{A}_{g'}$
  by taking preimages and images and fix an irreducible component $X'$ of the
  Zariski closure of the image inside $\mathfrak{A}_{g'}$.
  Consider $Y\in \cY$ and chase $\varphi(Y\cap\mathfrak{A}_{g,S,\IC})\subset\cB$
  through the diagram as just described. Recall that
  $\varphi(Y(\IC)\cap\mathfrak{A}_{g,S}(\IC))$ is the image of
  $\mathrm{unif}(\tilde Y \times\{\text{finite set}\})$.
  Locally in the Euclidean topology on the base, our abelian schemes
  are trivializable in the real analytic category.
  Moreover, all abelian
  varieties of $\cB$ above $S(\IC)\cap\pi(Y(\IC))$ are isomorphic by
  Proposition~\ref{prop:monodromyaction}(ii). So
  $\varphi(Y(\IC)\cap\mathfrak{A}_{g,S}(\IC))$ ends up
  as a finite set in $\mathfrak{A}_{g'}$. Thus applying both
  correspondence has
  fibers of dimension at least $\dim \varphi(Y\cap\mathfrak{A}_{g,S,\IC})
  = \dim \pi(Y)$. 
  Thus $\dim X' \le \dim \varphi(X\cap \mathfrak{A}_{g,S,\IC}) -
  \dim \pi(Y)$ by analysis of the fibers of the two correspondences.
  
  Note
  $\dim\varphi(X\cap \mathfrak{A}_{g,S}) \le \dim X - (\dim Y -
  \dim\pi(Y))$ because all fibers of $Y\rightarrow\pi(Y)$ have finite
  image under $\varphi$.
  We find $\dim X' \le \dim X
  - \dim Y < g -\dim Y$, having used $\dim X<g$.

  The relative dimension of
  $\cB/S$ is $g'=g-\delta$. As all elements in $\cY$ have positive
  dimension, Lemma~\ref{lem:predegenerate}(iv) applied to an element
  in $\cY$ implies $\delta \ge 1$.
  Therefore, $g'\le g-1$. 
  We have further  $\delta \le \dim Y$  by Proposition~\ref{prop:deg}(iv) with
  $t=1$. We conclude $\dim X' < g-\dim Y \le g-\delta =g'$. In
  particular, $g'\ge 1$.

  Chasing the Zariski dense set of torsion points in $X(\IQbar)\cap
  \mathfrak{A}_{g,\mathrm{tors}}$ through the  diagram shows that
  the torsion points in 
  $X'(\IQbar)$ are Zariski dense in $X'$.
  The generic fiber of
  $\varphi(X\cap\mathfrak{A}_{g,S})\rightarrow S$ is
  not contained in a proper algebraic subgroup of the generic fiber of
  $\cB\rightarrow S$ by the hypothesis on $X$ in RelMM($g$). This
  implies that the  generic fiber of $X'\rightarrow \pi(X')$ is not
  contained in a proper algebraic subgroup of the generic fiber of
  $\mathfrak{A}_{g',\pi^{-1}(X')}\rightarrow\pi(X')$.

  Recall  
  that $g' \in \{1,\ldots,g-1\}$ and so RelMM($g'$) holds by
  hypothesis. But then the properties of $X'$ contradict the
  conclusion of RelMM($g'$).   
\end{proof}

\begin{corollary}
  \label{cor:relmmconditional}
  Conjecture \ref{conj:relmmXdeg1dense}
  implies  Conjecture RelMM($g$)  for all $g\in\IN$.
\end{corollary}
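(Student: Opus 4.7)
The plan is to deduce Conjecture RelMM($g$) for all $g \in \IN$ by induction on $g$, with Theorem~\ref{thm:relmminduction} serving as the engine for the inductive step.

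For the base case $g=1$, I would show that Conjecture RelMM($1$) holds unconditionally. Indeed, if $X \subset \mathfrak{A}_1$ is an irreducible closed subvariety defined over $\IQbar$ with $\dim X < 1$, then $X$ is a single closed point $P$, lying in the fiber $\mathfrak{A}_{1,\pi(P)}$, which is an elliptic curve $E$ over $\IQbar$. Since $\pi(X)$ is a point, its generic point is $\pi(X)$ itself and $X_\eta = \{P\}$. The hypothesis that $X_\eta$ is not contained in a proper algebraic subgroup of $E$ is equivalent to $P$ being a non-torsion point, because the proper algebraic subgroups of an elliptic curve are precisely its finite (torsion) subgroups. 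Consequently $X(\IQbar) \cap \mathfrak{A}_{1,\mathrm{tors}} = \emptyset$, which is manifestly not Zariski dense in $X = \{P\}$.

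For the inductive step, fix $g \ge 2$ and assume that Conjecture RelMM($g'$) holds for every $g' \in \{1, \ldots, g-1\}$. Since we are also assuming Conjecture~\ref{conj:relmmXdeg1dense} in the hypothesis of the corollary, a direct invocation of Theorem~\ref{thm:relmminduction} yields Conjecture RelMM($g$). This closes the induction and completes the proof.

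I do not expect any genuine obstacle in this argument, since all of the substantive analytic and geometric content is encapsulated in Theorem~\ref{thm:relmminduction}; the induction itself is purely formal. The only point requiring attention is the unconditional verification of the base case $g=1$, and this reduces to the elementary observation about proper algebraic subgroups of elliptic curves recalled above.
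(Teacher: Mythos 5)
Your argument is correct and follows essentially the same route as the paper: induction on $g$ with Theorem~\ref{thm:relmminduction} supplying the step, and the base case $g=1$ handled by noting that $\dim X<1$ forces $X$ to be a point which, by the hypothesis on $X_\eta$, is non-torsion. The paper states this base case more tersely, but the content is identical.
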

\begin{proof}
  By Theorem~\ref{thm:relmminduction} it suffices to prove RelMM($1$).
  The condition on $\dim X$ in Conjecture RelMM($1$)
  implies that $X$ is a point. The condition on $X_\eta$ and $g=1$ imply that $X$
  is not a torsion point. So Conjecture RelMM($1$) holds true.   
\end{proof}

\bibliographystyle{alpha}
\bibliography{literature}


\end{document}